\newtheoremstyle{theorem}
  {15pt}          
  {15pt}  
  {\sl}  
  {\parindent}
  {\sc}  
  {. }   
  { }    
  {}     
\theoremstyle{theorem}
\newtheorem{theorem}{Theorem}[section]
\newtheorem{corollary}{Corollary}[section]
\newtheoremstyle{defi}
  {15pt}          
  {15pt}  
  {\rm}  
  {\parindent}     
  {\sc}  
  {. }    
  { }    
  {}     
\theoremstyle{defi}
\newtheorem{definition}{Definition}[section]
\newtheorem{remark}{Remark}[section]
\newtheorem{example}{Example}[section]
  \title[On the solution of two-sided fractional \dots]
       {On the solution of two-sided fractional \\ [3pt] 
       ordinary differential equations of \\[3pt]  Caputo type }
 \author[\normalsize M.E. Hern\'andez-Hern\'andez, V.N. Kolokoltsov]
  {\normalsize Ma. Elena Hern\'andez-Hern\'andez $^1$, Vassili N. Kolokoltsov $^2$}
\begin{document}

 \vbox to 2.5cm { \vfill }

 \bigskip \medskip

 \begin{abstract}
 
This paper provides well-posedness results and stochastic representations for the solutions to  equations involving both the right- and the left-sided generalized  operators  of  Caputo type. As a special case, these results   show the interplay between two-sided  fractional differential equations and  two-sided exit problems for certain  L\'evy processes.
 
 \medskip

{\it MSC 2010\/}: Primary 34A08;
                  Secondary 35S15, 26A33, 60H30
 \smallskip

{\it Key Words and Phrases}: two-sided fractional equations, generalized Caputo  type derivatives, boundary point, stopping time, Feller process, L\'evy process

 \end{abstract}

 \maketitle

 \vspace*{-22pt}



 \section{Introduction} \label{sec:1}
\setcounter{section}{1}
\setcounter{equation}{0}\setcounter{theorem}{0}

The successful use of classical fractional derivatives  to describe, for example,    relaxation  phenomena, processes of oscillation,  viscoelastic systems and  diffusions in disordered media (anomalous diffusions) among others,  have promoted  an increasing research on  the field of  fractional differential equations. For an account of historical notes, applications and different methods to solve fractional equations we refer, e.g., to    \cite{Bouchaud1990}-\cite{kai}, \cite{edwards}, \cite{gorenflo98},  \cite{Pal}-\cite{kilbas2},  \cite{KV},
\cite{FMainardi1997}-\cite{podlubny}, \cite{samko}, \cite{zaslavsky}, and references cited therein.

Apart from the different notions of fractional derivatives  found in the literature (e.g., the Caputo, the Riemann-Liouville,  the Grunwald-Letnikov, the Riesz,  the Weyl,  the Marchaud, and the  Miller and Ross fractional derivatives),    numerous generalizations  (mostly from an analytical point of view)  have been proposed by many authors, we refer, e.g., to \cite{Om2012}, \cite{Hilfer2000}-\cite{Kalla1980}, \cite{Kiryakova1994}-\cite{Kiryakova2008},  \cite{MarkTemperedFC} for details.    As for the \textit{generalized fractional operators of Caputo type}  considered in this work, they were introduced in \cite{KVFDE} by one of the authors as  generalizations (from a probabilistic point of view) of the classical    Caputo derivatives of order $\beta \in (0,1)$ when applied to regular enough functions. These Caputo type operators can be thought of as the generators of   Feller  processes \textit{interrupted}  on the first attempt to cross certain boundary point (see precise definition later).

As a continuation of our  previous works, which show a new link between stochastic analysis and fractional equations (see \cite{KV-1}-\cite{KV-2}, \cite{KVFDE}), this paper appeals to a  probabilistic approach  to study   equations  involving both  left-sided   and  right-sided generalized operators of Caputo type. We address   the boundary value problem  for the \textit{two-sided generalized linear equation} with Caputo type derivatives $-D_{a+*}^{(\nu_+)}$ and   $-D_{b-*}^{(\nu_{-})}$:
\begin{align}\nonumber
 -D_{a+*}^{(\nu_+)} u(x)  -D_{b-*}^{(\nu_{-})} u(x) - A u(x)&= \lambda u(x) - g(x),\quad x \in (a,b), \\ u(a)&= u_a,\quad u(b) = u_b,
    \label{Eq0-C}\end{align}
\noindent where $\lambda \ge 0$, $u_a, u_b \in \mathbb{R}$ and  $g$ is a  prescribed function on $[a,b]$. Notation $-A \equiv -A^{(\gamma,\alpha)}$ refers to the second order differential operator
\begin{equation}\label{A}
- A^{(\gamma,\alpha)}: =  \gamma (\cdot) \frac{d}{dx} +  \alpha(\cdot)  \frac{d^2}{dx^2}.
\end{equation}
 Equation (\ref{Eq0-C}) includes, as  special  cases, the fractional equations
\begin{align}\label{Eq0-0}
 D_{a+*}^{\beta_1} u(x)  + D_{b-*}^{\beta_2} u(x) &= g(x),\quad x \in (a,b),\quad \beta_1, \beta_2 \in (0,1), \\u(a)= u_a,\quad  u(b) &= u_b,   \nonumber
  \end{align}
where   $D_{a+*}^{\beta_1}$ and $D_{b-*}^{\beta_2}$ are the left- and the right-sided Caputo derivatives  of order $\beta_1$ and $\beta_2$, respectively. There are  relatively scarce results dealing with two-sided fractional ordinary equations.  For example, to the best of our knowledge, the Riemann-Liouville version of \eqref{Eq0-0}
was analyzed (in the space of distributions)  in \cite{2s2008}-\cite{2s2011}, whereas the  explicit solution to the two-sided fractional   equation  in (\ref{Eq0-0}) was just recently provided    in \cite{KVFDE}.

Another special case of equation (\ref{Eq0-C}) is  the \textit{two-sided} equation:
\begin{align}\label{Eq1-1}
 c_1D_{a+*}^{\beta_1} u(x)  + c_2 D_{b-*}^{\beta_2} u(x) + \gamma(x) u'(x) + \lambda u(x) &= g(x),\quad x \in (a,b), \\u(a)= u_a,\quad \quad  u(b) &= u_b.   \nonumber \end{align}
 If $c_1>0$, $c_2 = 0$, $\beta_1 = \frac{1}{2}$ and $\lambda = 1$, then the  (one-sided) equation is known as the  \textit{Basset equation}, well-studied in the literature (see, e.g. \cite{FMainardi1997} and references therein).
The one-sided  case with $\beta_1 \in (0,1)$ (known as the \textit{composite fractional relaxation equation}) was treated via the Laplace transform method in \cite[Section 4]{gorenflo2008}, whereas the left-sided case  with Caputo type and RL type operators  was studied by the authors in \cite{KV-2}.

Some other examples   showing the relevance of left- and right-sided derivatives in mathematical modeling  appear in the study of FPDE's on bounded domains, as well as in fractional calculus of variations, see, e.g.,\cite{Agrawal2002},
  \cite{teodor},  \cite{Pal},  \cite{Mark2sided},        \cite{CTorres}.

In this paper we study the well-posedness of \eqref{Eq0-C}  by considering two types of solutions: \textit{solutions in the domain of the generator}  and \textit{generalized solutions}. The first type is understood as a solution $u$ that belongs to the domain of the two-sided operator  seen as the generator of a Feller process.  Since the existence of such a solution is quite restrictive once one imposes boundary conditions, the notion of  generalized solution is introduced via the  limit of approximating   solutions taken from the domain of the generator.

Further, appealing to the relationship between two-sided equations and  exit problems for Feller processes (already mentioned in \cite{KVFDE}),  we provide some  explicit solutions to   two-sided equations in the context of classical fractional derivatives.
 Even though   exit problems for L\'evy processes have been widely studied (see, e.g., \cite{bertoin96}-\cite{Getoor},  \cite{Watson}, \cite{watanabe}),    to our knowledge fractional equations of the type in (\ref{Eq0-0}) and their connection with exit problems  seem to be novel in the literature.  We believe that the probabilistic solutions presented in this work can be used, for example, to obtain  numerical  solutions to  classical fractional equations for which explicit solutions are unknown.

\smallskip

 The paper is organized as follows. The next Section \ref{sec:2} sets  standard notation and definitions.  
 Section \ref{sec:3} gives a quick review about generalized Caputo type operators.      
 Section \ref{sec:4} provides  preliminary results concerning   \textit{two-sided generalized  operators} and their connections 
 with the generators of Feller processes.  Then, Section \ref{sec:5} addresses the  well-posedness  for  the RL type version 
 of (\ref{Eq0-C}).  The study of the  Caputo type equation (\ref{Eq0-C})  is given in Section \ref{sec:6}.    
 Some examples  are presented in Section \ref{sec:7}.  
 Finally, Section \ref{sec:8} contains the proofs of  some key  results established in  Section \ref{sec:4}.
 
 \vspace*{-4pt}
 
 \section{Preliminaries} \label{sec:2} 
 \setcounter{section}{2}
\setcounter{equation}{0}\setcounter{theorem}{0}

 \subsection{Notation}\label{section2.1}
Let $\mathbb{N}$  and $\mathbb{R}$  be the set of positive integers and  the real line, respectively.  For any open set $A \subset \mathbb{R}$,  notation  $B(A)$, $C(A)$ and    $C_{\infty}(A)$   denote the set of bounded Borel measurable functions, bounded continuous functions and   continuous functions vanishing at infinity defined on $A$, respectively, equipped with the sup-norm $||h|| = \sup_{x \in A} |h(x)|$.   The space of continuous functions  on  $A$  with continuous derivatives up to and including order $k$  is denoted by  $C^{k}(A)$. This space is equipped with the norm $|| h ||_{C^k} := ||h|| + \sum_{k=1} ||h^{(k)}||$.
  For  functions defined on the closure $\bar{A}$ of $A$, notation  $C^k(\bar{A})$ means the space of $k$ times continuously  differentiable functions up to the boundary.
Further,   spaces $C_0[a,b]$ and $C_0^k[a,b]$  stand for the space of continuos functions on $[a,b]$   vanishing at the boundary and   the space of functions $C_0[a,b]\cap C^k[a,b]$, respectively.

Letters $\mathbf{P}$ and $\mathbf{E}$  are reserved for the probability and the mathematical expectation, respectively.  For a stochastic process $X_x=(X_x(t))_{t\ge 0}$ with state space $A$, the subscript $x$ in  $X_x(t)$ means that the process starts at $x \in A$, so that notation $ \mathbf{E} \left[  f \left( X_x (t) \right ) \right]$ is  understood as  $\mathbf{E} \left[  f \left( X (t) \right )  | X(0) = x\right]$.  All the processes considered in this paper are assumed to be defined on a fixed complete probability space $(\Omega, \mathcal{F},  \mathbb{P})$.
\subsection{Feller processes: basic definitions}
Let  $\{T_t\}_{t \ge 0}$ be a strongly continuous semigroup  of linear bounded operators on a Banach space $(B, ||\cdot||_B)$, i.e. $\lim_{t \to 0} || T_t f - f ||_{B} = 0$ for all $f \in B$.  Its (infinitesimal) generator $L$ with domain $\mathfrak{D}_L$, shortly $(L, \mathfrak{D}_L)$,  is defined  as the (possibly unbounded) operator $L : \mathfrak{D}_L \subset B \to B$ given by the strong limit
\begin{equation}\label{Gdef}
L  f  := \lim_{t \downarrow 0} \frac{T_t f- f}{ t}, \quad f\in \mathfrak{D}_L,
\end{equation}
where the domain of the generator $\mathfrak{D}_L$   consists of those   $f \in B$ for which the limit in  (\ref{Gdef}) exists in the norm sense.  We also recall that, if $L$ is a closed operator, then a linear subspace $\mathcal{C}_L \subset \mathfrak{D}_L$ is called a \textit{core} for the generator $L$ if the operator $L$ is the closure of the restriction $L\big |_{\mathcal{C}_L}$ \cite[Chapter 1, Section 3]{EK}.  If  additionally $T_t \mathcal{C}_L  \subset \mathcal{C}_L$ for all $t\ge 0$, then $\mathcal{C}_L$ is said to be an  \textit{invariant core}.
The  \textit{resolvent operator} $R_{\lambda}$ of the semigroup $\{T_t\}_{t\ge 0}$  is defined (for any $\lambda > 0$) as the Bochner integral (see,  e.g.,  \cite[Chapter 1]{dynkin1965}, \cite[Chapter 1]{EK})
\begin{equation}\label{resolvent}
R_{\lambda} g\,:= \int_0^{\infty} e^{-\lambda t} T_t g\,dt, \quad g \in B.
\end{equation}
By taking $\lambda = 0$ in (\ref{resolvent}), one  obtains  the \textit{potential operator}  denoted by $R_0g$ (whenever it exists).

We say that a (time-homogeneous) Markov process $X=(X(t))_{t\ge 0}$ taking values on $A \subset \mathbb{R}^d$   is a \textit{Feller process} (see, e.g., \cite[Section 3.6]{KV})  if its semigroup $\{T_t\}_{t\ge0}$,
defined  by
\[  T_t f (x)  := \mathbf{E} \left[  f \left( X (t) \right )  | X(0) = x\right], \quad t \ge 0, \,\,\, x \in A, \,\,\,  f \in B(A),\]
 gives rise to a \textit{Feller semigroup}  when reduced to $C_{\infty} (A)$, i.e. it is a strongly continuous semigroup  on $C_{\infty} (A)$ and it is formed  by  positive linear contractions  ($0 \le T_t f \le 1$ whenever $0 \le  f \le 1$).


\section{Generalized fractional operators of Caputo type and RL type} \label{sec:3}
\setcounter{section}{3}
\setcounter{equation}{0}\setcounter{theorem}{0}

The generalized Caputo type operators introduced in \cite{KVFDE} are defined in terms of a function $\nu: \mathbb{R} \times \left( \mathbb{R} \setminus
\{0\}\right) \to \mathbb{R}^+$ satisfying the condition:
 \begin{enumerate}
  \item [\textbf{(H0)}] The function $\nu(x,y)$ is  continuous as a function of two variables and continuously  differentiable  in the first variable.    Furthermore,
  \begin{equation}\nonumber
    \sup_x \int \min\{1,|y|\} \nu (x,y)dy < \infty, \quad \sup_x \int \min \{1,|y|\} \Big | \frac{\partial}{\partial x} \nu(x,y)\Big |dy < \infty,
  \end{equation}
  and
  \begin{equation}\nonumber
    \lim_{\delta \to 0} \sup_x \int_{|y| \le \delta} |y| \nu(x,y)dy = 0.
   \end{equation}
  \end{enumerate}

 \begin{definition}   
 Let $a,b \in \mathbb{R}$ with $a < b$. For any function $\nu$ satisfying the condition (H0), the operators $-D_{a+*}^{(\nu)}$ and $-D_{b-*}^{(\nu)}$ defined  by
  \begin{align}
 \left( -D_{a+*}^{(\nu)} h\right)(x)      = \int_0^{x-a}  ( h(x-y) &- h(x)) \nu(x,y)dy \,\,+ \nonumber \\ &+ (h(a) - h(x)) \int_{x-a}^{\infty}\nu(x,y)dy, \label{genaP}
  \end{align}
  for functions $h: [a, \infty) \to \mathbb{R}$, and by
   \begin{align}
\left(-D_{b-*}^{(\nu)} h\right) (x)   = \int_0^{b-x} ( h(x+y) &- h(x)) \nu(x,y)dy \,\,+ \nonumber \\ &+ (h(b) - h(x)) \int_{b-x}^{\infty}\nu(x,y)dy,  \label{genaP-}
  \end{align}
  for functions $h: (-\infty,b] \to \mathbb{R}$,
are  called \textit{the generalized left-sided Caputo type operator} and  \textit{the generalized right-sided Caputo type operator}, respectively.   The values $a$ and $b$ will be referred to as the \textit{terminals} of the corresponding  operators.
\end{definition}

\begin{remark} 
The sign $-$ appearing in the previous notation  is introduced to comply with the standard notation of fractional derivatives.
\end{remark}

Due to assumption (H0),  the  operators (\ref{genaP})-(\ref{genaP-}) are well defined at least on the space of continuously differentiable functions (with bounded derivative).

\begin{remark}\label{RLDefinition}
The \textit{left-sided} (resp. \textit{right-sided}) \textit{generalized Riemann-Liouville type operator} $-D_{a+}^{(\nu)}$ (resp. $-D_{b-}^{(\nu)}$) is defined by setting $h(a) = 0$ (resp.  $h(b)=0$) in \eqref{genaP} (resp. \eqref{genaP-}). Hence,
\begin{equation}\nonumber-D_{a+*}^{(\nu)} h(x) =- D_{a+}^{(\nu)} [ h - h(a)](x) \quad \text{ and } \quad-D_{b-*}^{(\nu)} h(x) = - D_{b-}^{(\nu)} [ h - h(b)](x).
\end{equation}
\end{remark}

 \subsubsection{Particular cases.}
For smooth enough functions $h$,  the standard analytical  definitions of the left-sided Caputo derivative $D_{a+*}^{\beta}$ and the right-sided Caputo derivatives $D_{b-*}^{\beta}$ of order $\beta \in (0,1)$ (see, e.g., \cite[Definition 2.2, Definition 3.1]{kai}) can be rewritten as (see, e.g., \cite[Appendix]{KVFDE})
\begin{align}\label{Caputo}
  \left(D_{a+*}^{\beta}h\right) (x) &= \frac{\beta}{\Gamma ( 1- \beta)} \int_0^{x-a} \frac{h(x-y) - h(x) }{y^{1+\beta}} dy - \frac{h(x) - h(a)}{ \Gamma (1-\beta) (x-a)^{\beta}},
  \end{align}
and
\begin{align}\label{CaputoRight}
 \left( D_{b-*}^{\beta}h \right)(x) &= \frac{\beta}{\Gamma (1- \beta)} \int_0^{b-x} \frac{h(x+y) - h(x) }{y^{1+\beta}} dy - \frac{h(x) - h(b)}{ \Gamma (1-\beta) (b-x)^{\beta}}.
  \end{align}
Hence, for $h$ regular enough, $D_{a+*}^{\beta} h$ (resp. $-D_{b-*}^{\beta}h$) is a particular case of $ -D_{a+*}^{(\nu)}h$ (resp.  $-D_{b-*}^{\beta} h$) obtained by taking  the function  \begin{equation}\label{2.7a}
 \nu(x,y)\equiv \nu(y) = \, -\, \frac{\beta}{\Gamma(1-\beta) y^{1+\beta}} ,   \quad \beta \in (0,1).
\end{equation}

  \begin{remark}
 Other examples of generalized  operators  $-D_{a+*}^{(\nu)}$ include the \emph{fractional derivatives of variable order}, as well as the \emph{generalized distributed order fractional derivatives} (see \cite{KV-1}, \cite{KVFDE} for precise definitions).
 \end{remark}

\section{Two-sided operators of RL type and  Caputo type} \label{sec:4}
\setcounter{section}{4}
\setcounter{equation}{0}\setcounter{theorem}{0}

Given two functions  $\nu_+$ and $\nu_{-}$  satisfying  condition (H0),  define the   function $\nu \,: \mathbb{R} \times \mathbb{R}\setminus\{0\} \to \mathbb{R}^+$ associated with $\nu_+$ and $\nu_{-}$ by setting
\begin{equation}\label{nu+-}
 \nu(x,y) := \nu_+(x,y),\quad y > 0,\quad\quad \nu(x,y) := \nu_{-}(x,-y), \quad y <0.
 \end{equation}
Define the \textit{two-sided operator of RL type} $-L_{[a,b]}$ and  the \textit{two-sided operator of Caputo type}  $-L_{[a,b]*}$  by
\begin{align}
\left(- L_{[a,b]}f \right)(x)&:=
\left ( -D_{a+}^{(\nu_+)} f \right )(x) + \left( -D_{b-}^{(\nu_{-})} f \right ) (x) + \left (- A^{(\gamma,\alpha)}  f  \right  ) (x),
\label{DabA}  \end{align}
and
\begin{align}
\left(- L_{[a,b]*} f \right)(x)&:= \left ( -D_{a+*}^{(\nu_+)} f \right )(x) + \left( -D_{b-*}^{(\nu_{-})} f \right ) (x) + \left (- A^{(\gamma,\alpha)}  f  \right  ) (x).\label{DabA*}  \end{align}
 Notation $-A^{(\gamma,\alpha)}$ stands for the differential operator given  in (\ref{A}).  We will see that the operator $-L_{[a,b]*}$  can be thought of as the generator of a Feller process on $[a,b]$, whereas $-L_{[a,b]}$ is related to the generator of a killed process. For that purpose,  let us  introduce an additional definition     for the regularity of the boundary (see, e.g., \cite[Chapter 6]{KV0}).

  \begin{definition} 
  For a domain $D\subset \mathbb{R}$ with boundary $\partial D$, a point $x_0 \in \partial D$ is said to be  \textit{regular in expectation} for a Markov process $X$ (or for its generator ) if $\,\,\mathbf{E} \left [ \tau_{D}(x) \right ] \to 0,$ as $x \to x_0,\,\, x \in D,$
   where
   $\tau_{D} (x):= \inf \left \{ t \ge 0\,:\, X_x(t) \notin D \right \}$, with the usual convention that $\inf \{ \varnothing\} = \infty$.
 \end{definition}

\begin{theorem}  \label{L0diffusion-int}
 Let $\nu$   be a function satisfying  assumption  (H0).  Suppose  that   $\gamma \in C_0^3[a,b]$, $\alpha  \in C^3[a,b]$ with derivative $\alpha' \in C_0[a,b]$ and  $ \alpha$ being a positive function.  Then, 
 \smallskip
 
 $(i)$ the  operator $(\,-L_{[a,b]*},\,\hat{\mathfrak{D}}_{*} \,)$
generates a Feller process $\hat{X}$  on $[a,b]$  with a domain  $\hat{\mathfrak{D}}_{*}$ such that
\begin{equation}
\left \{  f \in C^2[a,b]\,:\, f' \in C_0[a,b] \right \}\subset \hat{\mathfrak{D}}_{*}.
\end{equation}
  
  $(ii)$  The points $\{a,b\}$  are regular in expectation for $(-L_{[a,b]*}, \hat{\mathfrak{D}}_{*})$.   Further, the first exit time $\hat{\tau}_{(a,b)}(x)$ from the interval $(a,b)$ of  $\hat{X}_x$,  $x \in (a,b)$, has a finite  expectation.
 \end{theorem}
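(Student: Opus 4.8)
The plan for part $(i)$ is to realise $-L_{[a,b]*}$ as a perturbation of a known Feller generator. Write $-L_{[a,b]*}=\mathcal A+B$, where $\mathcal A:=-A^{(\gamma,\alpha)}$ is taken with domain $D_{\mathcal A}:=\{f\in C^2[a,b]:f'\in C_0[a,b]\}$ and $B:=-D_{a+*}^{(\nu_+)}-D_{b-*}^{(\nu_-)}$ collects the two generalized Caputo parts. Under the stated hypotheses ($\alpha>0$ continuous, $\gamma\in C_0^3[a,b]$, $\alpha\in C^3[a,b]$ with $\alpha'\in C_0[a,b]$) the operator $(\mathcal A,D_{\mathcal A})$ is, classically, the generator of the reflecting Feller diffusion on $[a,b]$; moreover its resolvent obeys $\|(\lambda-\mathcal A)^{-1}\|_{C[a,b]\to C^1[a,b]}\to0$ as $\lambda\to\infty$, by the interpolation inequality $\|f'\|\le\varepsilon\|f''\|+C_\varepsilon\|f\|$ together with the elliptic bound $\|f''\|\le C(\|\mathcal Af\|+\|f\|)$ valid on $D_{\mathcal A}$.

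The next step is to show, using assumption (H0), that $B$ maps $D_{\mathcal A}$ into $C[a,b]$ with $\|Bf\|\le C_B(\|f\|+\|f'\|)$. The only delicate point is that the ``overshoot'' term $(f(a)-f(x))\int_{x-a}^{\infty}\nu_+(x,y)\,dy$ remains bounded and continuous up to $x=a$ precisely because $f'(a)=0$ makes $f(a)-f(x)=O((x-a)^2)$, while (H0) only gives $\int_{x-a}^{\infty}\nu_+(x,y)\,dy=O((x-a)^{-1})$; the condition $\lim_{\delta\to0}\sup_x\int_{|y|\le\delta}|y|\nu(x,y)\,dy=0$ handles the small jumps, and the first bound in (H0) controls the truncated integral. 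Combined with the two inequalities quoted above, this makes $B$ an $\mathcal A$-bounded perturbation of relative bound $0$; hence $(\mathcal A+B,D_{\mathcal A})$ is closed, and for $\lambda$ large $I-B(\lambda-\mathcal A)^{-1}$ is invertible on $C[a,b]$, so the range of $\lambda+L_{[a,b]*}$ is all of $C[a,b]$. One then checks directly that $-L_{[a,b]*}$ annihilates constants (conservativeness) and satisfies the positive maximum principle on $D_{\mathcal A}$: at an interior positive maximum the two integral terms in $-L_{[a,b]*}f$ are $\le0$ by their very form and $\alpha f''\le0$; at a boundary maximum $x_0\in\{a,b\}$ the relation $f'(x_0)=0$ forces $f''(x_0)\le0$ and kills the first-order term. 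The Hille--Yosida--Ray theorem now gives that $(-L_{[a,b]*},D_{\mathcal A})$ generates a Feller process $\hat X$ on $[a,b]$ with $\hat{\mathfrak D}_{*}=D_{\mathcal A}$, which proves $(i)$.

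For part $(ii)$ the plan is a Lyapunov/Dynkin argument: construct $w\in C^2[a,b]$, \emph{concave}, with $w\ge0$, $w(a)=w(b)=0$, and $-L_{[a,b]*}w(x)\le-1$ for every $x\in(a,b)$. Concavity is exactly what lets one control the sign of the Caputo terms: if $w$ attains its maximum at $x^*$, then $w$ is nondecreasing on $(a,x^*)$, so $w(x-y)\le w(x)$ and $-D_{a+*}^{(\nu_+)}w\le0$ there; symmetrically $-D_{b-*}^{(\nu_-)}w\le0$ on $(x^*,b)$; on the complementary half the chord-below-tangent inequality bounds the remaining Caputo term by $m_\pm(x)|w'(x)|$, where $m_+(x):=\int_0^{x-a}y\,\nu_+(x,y)\,dy$ and $m_-(x):=\int_0^{b-x}y\,\nu_-(x,y)\,dy$ are finite and bounded by (H0). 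Consequently $-L_{[a,b]*}w(x)\le\Gamma(x)|w'(x)|+\alpha(x)w''(x)$ with $\Gamma:=|\gamma|+\max\{m_+,m_-\}$ a fixed bounded function, and it remains to produce a concave profile $\phi$ on $[0,\frac{b-a}{2}]$ solving $\Gamma_0\phi'+c\,\phi''=-1$ (with $\Gamma_0:=\|\Gamma\|$, $c:=\min_{[a,b]}\alpha>0$) with $\phi(0)=0$ and $\phi'(\frac{b-a}{2})=0$; the explicit solution has $\phi'\ge0$, $\phi''<0$, $\phi\ge0$, and gluing it to its mirror image across the midpoint, $w(x):=\phi(x-a)$ on $[a,\frac{a+b}{2}]$ and $w(x):=\phi(b-x)$ on $[\frac{a+b}{2},b]$, yields the desired $C^2$ concave supersolution.

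It remains to run Dynkin's formula for $w$ along $\hat X_x$ up to the time $\hat\tau_{(a,b)}(x)\wedge t$. Here $w\notin\hat{\mathfrak D}_{*}$ (indeed no nonnegative supersolution vanishing at both endpoints can lie in $\hat{\mathfrak D}_{*}$, since then $-L_{[a,b]*}w$ would be continuous up to $\{a,b\}$ and nonnegative at the endpoints, contradicting $-L_{[a,b]*}w\le-1$ near them); but before the boundary is reached no boundary local time is accrued, so It\^o's formula for the jump--diffusion applies to the $C^2$ function $w$ on $[0,\hat\tau_{(a,b)}(x)\wedge t]$ and gives $\mathbf E[\hat\tau_{(a,b)}(x)\wedge t]\le w(x)-\mathbf E[w(\hat X_x(\hat\tau_{(a,b)}(x)\wedge t))]\le w(x)$. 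Letting $t\to\infty$ (monotone convergence) yields $\mathbf E[\hat\tau_{(a,b)}(x)]\le w(x)<\infty$ for $x\in(a,b)$; since $w$ is continuous with $w(a)=w(b)=0$, this also gives $\mathbf E[\hat\tau_{(a,b)}(x)]\to0$ as $x\to a$ or $x\to b$, i.e. regularity in expectation of $\{a,b\}$. The main obstacle, I expect, lies in part $(i)$: verifying that $B$ is a genuinely \emph{small} perturbation although it is unbounded on $C[a,b]$ --- this is where the interplay between (H0), the first-order (only $C^1$-regularity) nature of the generalized Caputo operators, and the boundary condition $f'(a)=f'(b)=0$ is indispensable; the sign bookkeeping that makes the supersolution of $(ii)$ work is the other place requiring care.
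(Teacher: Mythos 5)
Your part $(i)$ is correct but takes a genuinely different route from the paper. The paper truncates the jump kernel ($\nu_h=\Phi_h\nu$), so that $-L_{h*}$ is the reflecting diffusion plus a \emph{bounded} perturbation, and then proves convergence of the semigroups $T_t^h$ as $h\to0$; the key technical input there is a uniform-in-$h$ bound on $\frac{d}{dx}T_t^hf$, obtained by differentiating the evolution equation and analysing the derived semigroup $T_t^{h,(1)}$. You instead treat the whole nonlocal part $B$ as a single relatively bounded perturbation of the reflecting diffusion with relative bound $0$ (via $\|Bf\|\le C\|f\|_{C^1}$, interpolation, and the elliptic bound $\|f''\|\le C(\|\mathcal Af\|+\|f\|)$), obtain the range condition from the smallness of $B(\lambda-\mathcal A)^{-1}$ for large $\lambda$, and invoke Hille--Yosida--Ray together with the positive maximum principle. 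This is cleaner, avoids the derived-semigroup machinery, and even identifies $\hat{\mathfrak D}_{*}=D_{\mathcal A}$ exactly; the price is that it leans essentially on $\alpha>0$ for the $C\to C^1$ smallness of the resolvent, so unlike the paper's scheme it would not carry over to the pure-jump case of Theorem \ref{L0jump}. Your bookkeeping for the overshoot term is right: $(x-a)\int_{x-a}^\infty\nu_+(x,y)\,dy$ is bounded by (H0) alone, and $f'(a)=0$ is what yields continuity of $Bf$ up to $x=a$.

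For part $(ii)$ both you and the paper run a Lyapunov-function argument, but with different functions: the paper uses $f_w(x)=(x-a)^w$ with $w$ small near each endpoint and appeals to \cite[Proposition 6.3.2]{KV0}, whereas you build a global concave $C^2$ supersolution with $-L_{[a,b]*}w\le-1$ and $w(a)=w(b)=0$, which is attractive because $\mathbf E[\hat\tau_{(a,b)}(x)]\le w(x)$ delivers finiteness and regularity in expectation simultaneously (your sign analysis of the Caputo terms via concavity, and the ODE for $\phi$ with $c=\min\alpha$, check out). The one genuine gap is the step where you apply ``It\^o's formula for the jump--diffusion'' to $w$: the process $\hat X$ has only been constructed abstractly as a Feller process, so the Dynkin martingale is available only for functions in $\hat{\mathfrak D}_{*}$, and $w\notin\hat{\mathfrak D}_{*}$ (indeed $w'(a)\ne0$). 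A pathwise It\^o formula would require first realising $\hat X$ as a solution of an SDE or martingale problem with the explicit jump kernel, which you have not done, and localising inside $(a+\epsilon,b-\epsilon)$ does not help directly because the operator is nonlocal. This is precisely the point the paper covers by citing the Lyapunov criterion of \cite{KV0}, which is formulated for functions outside the domain; you need either that citation or a domain approximation of $w$ to close the argument.
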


  \begin{proof}
 See proof in Section \ref{sec:8}.
 \end{proof}

\textit{Stopped and killed processes.} 
To introduce the notion of solutions to the equation  (\ref{Eq0-C}) we are interested in,  we  need the stopped version of  $\hat{X}$.

 \begin{theorem}\label{L0diffusion}
Suppose that the assumptions of Theorem \ref{L0diffusion-int} hold.  Let $\hat{X}_x$ be the process  started at $x \in (a,b)$ generated by $(\,-L_{[a,b]*},\,  \hat{\mathfrak{D}}_{*}\,)$.
\begin{itemize}
\item [(i)]The process $X_x^{[a,b]*}$ defined by $X_x^{[a,b]*}(s) :=  \hat{X}_x (s \wedge \hat{\tau}_{(a,b)}(x))$,  $s \ge 0$, is a Feller process on $[a,b]$. If the operator  $(-L_{stop}, \mathfrak{D}_{[a,b]*}^{stop})$ denotes the generator of $X^{[a,b]*}$,    then  for any $f \in \hat{\mathfrak{D}}_{*}$ satisfying $\left(-L_{[a,b]*} f\right) (x) = 0$ for $x \in \{a,b\}$, it holds that  $f \in \mathfrak{D}_{[a,b]*}^{stop}$ and $-L_{stop} f = -L_{[a,b]*} f$.
\item [(ii)]The process $X_x^{[a,b]}$ defined by $X_x^{[a,b]}(s) :=  X_x^{[a,b]*} (s)$ for $s < \hat{\tau}_{(a,b)}(x)$ is a Feller (sub-Markov) process on $(a,b)$. If
 $(-L_{kill}, \mathfrak{D}_{[a,b]}^{kill})$ denotes the generator of  $X^{[a,b]}$,   then for any  $f \in \mathfrak{D}_{[a,b]*}^{stop}$ satisfying $f (x) = 0$ for $x \in \{a,b\}$, it holds that  $f \in \mathfrak{D}_{[a,b]}^{kill}$ and $-L_{kill} f = -L_{[a,b]}f $.
\end{itemize}
\end{theorem}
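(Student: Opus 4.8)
The plan is to obtain both stopped and killed processes as standard transformations of the Feller process $\hat X$ furnished by Theorem \ref{L0diffusion-int}, and then to identify the relation between the generators by a direct computation on the stated classes of test functions. For part (i), I would first recall that stopping a Feller process at an exit time of an open set yields again a Feller process on the closed set, provided the boundary is regular enough; here the crucial ingredient is precisely Theorem \ref{L0diffusion-int}(ii), which guarantees that $\{a,b\}$ are regular in expectation and that $\hat\tau_{(a,b)}(x)<\infty$ a.s.\ with finite expectation. Concretely, I would check that $T^{stop}_t f(x):=\mathbf E\left[f\big(\hat X_x(t\wedge\hat\tau_{(a,b)}(x))\big)\right]$ defines a strongly continuous contraction semigroup on $C[a,b]$: the contraction and positivity properties are immediate, the semigroup property follows from the strong Markov property of $\hat X$ applied at $t\wedge\hat\tau$, and strong continuity at $t=0$ reduces to the estimate $\mathbf E\left[\hat\tau_{(a,b)}(x)\right]\to 0$ as $x\to a$ or $x\to b$, which is exactly the regularity-in-expectation hypothesis (for interior $x$ continuity is inherited from $\hat X$).

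To identify the generator on the test class, take $f\in\hat{\mathfrak D}_*$ with $\left(-L_{[a,b]*}f\right)(x)=0$ for $x\in\{a,b\}$. The key point is Dynkin's formula: for $t\ge 0$,
\begin{equation}\nonumber
T^{stop}_t f(x) - f(x) = \mathbf E\!\left[\int_0^{t\wedge\hat\tau_{(a,b)}(x)} \left(-L_{[a,b]*}f\right)\!\big(\hat X_x(s)\big)\,ds\right].
\end{equation}
Dividing by $t$ and letting $t\downarrow 0$, the right-hand side converges to $\left(-L_{[a,b]*}f\right)(x)$ by dominated convergence and continuity of $-L_{[a,b]*}f$ — this is where the boundary condition $\left(-L_{[a,b]*}f\right)(x)=0$ at $x\in\{a,b\}$ is used: it ensures there is no spurious contribution from the (positive-probability) event that the path has already been stopped at the boundary by time $t$, so the limit still equals $\left(-L_{[a,b]*}f\right)(x)$ even when $x=a$ or $x=b$. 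Hence $f\in\mathfrak D^{stop}_{[a,b]*}$ and $-L_{stop}f=-L_{[a,b]*}f$ on all of $[a,b]$.

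For part (ii), $X^{[a,b]}$ is the killed process obtained by sending the stopped process to a cemetery (equivalently, restricting attention to the event $\{s<\hat\tau_{(a,b)}(x)\}$), which is the classical construction of a sub-Markov Feller semigroup on the open set $(a,b)$: $T^{kill}_t g(x):=\mathbf E\left[g\big(X^{[a,b]*}_x(t)\big)\mathbf 1_{\{t<\hat\tau_{(a,b)}(x)\}}\right]$ for $g\in C_\infty(a,b)$, with strong continuity again following from the finiteness and regularity of $\hat\tau_{(a,b)}$. If $f\in\mathfrak D^{stop}_{[a,b]*}$ with $f(a)=f(b)=0$, then extending $f$ by $0$ to the cemetery one has $T^{kill}_t f(x)=T^{stop}_t f(x)$ for $x\in(a,b)$ (the discrepancy is $\mathbf E[f(\hat X_x(t\wedge\hat\tau))\mathbf 1_{\{t\ge\hat\tau\}}]$, which vanishes because $\hat X_x(\hat\tau)\in\{a,b\}$ and $f$ vanishes there). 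Taking the strong limit $t\downarrow 0$ in $C_\infty(a,b)$ gives $f\in\mathfrak D^{kill}_{[a,b]}$ and $-L_{kill}f=-L_{stop}f=-L_{[a,b]*}f$ on $(a,b)$; finally $-L_{[a,b]*}f=-L_{[a,b]}f$ there because, by Remark \ref{RLDefinition}, $-D^{(\nu_+)}_{a+*}f=-D^{(\nu_+)}_{a+}[f-f(a)]=-D^{(\nu_+)}_{a+}f$ and similarly at $b$, using $f(a)=f(b)=0$.

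I expect the main obstacle to be the rigorous justification of strong continuity of the stopped semigroup up to the boundary — i.e.\ controlling $T^{stop}_t f(x)-f(x)$ uniformly in $x$ near $a$ and $b$. This is where Theorem \ref{L0diffusion-int}(ii) is indispensable: the bound $\mathbf E\left[\hat\tau_{(a,b)}(x)\right]\to 0$ as $x\to\partial(a,b)$, combined with Dynkin's formula and $\|-L_{[a,b]*}f\|<\infty$, yields $|T^{stop}_t f(x)-f(x)|\le \|-L_{[a,b]*}f\|\,\mathbf E[t\wedge\hat\tau_{(a,b)}(x)]$, which goes to $0$ uniformly; the analogous estimate handles the killed semigroup. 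The identification of the generators is then a routine application of Dynkin's formula and the defining boundary conditions.
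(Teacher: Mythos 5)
Your argument reaches the right conclusions with the same essential ingredients as the paper (regularity in expectation of $\{a,b\}$ from Theorem \ref{L0diffusion-int}(ii), Dynkin's formula, the absorbing/path-coincidence structure of the stopped and killed processes, and Remark \ref{RLDefinition} to trade $-L_{[a,b]*}$ for $-L_{[a,b]}$ on functions vanishing at the boundary), but the generator identification in part (i) goes by a different route. The paper works through the resolvent: writing $f=\hat{R}_{\lambda}g$ for some $g\in C[a,b]$, it uses the resolvent equation together with the hypothesis $(-L_{[a,b]*}f)(a)=(-L_{[a,b]*}f)(b)=0$ to get $f(a)=g(a)/\lambda$ and $f(b)=g(b)/\lambda$, and then Dynkin's formula plus coincidence of the paths of $\hat{X}_x$ and $X_x^{[a,b]*}$ before $\hat{\tau}_{(a,b)}(x)$ to conclude $\hat{R}_{\lambda}g=R_{\lambda}^{[a,b]*}g$; membership $f\in\mathfrak{D}_{[a,b]*}^{stop}$ is then automatic because the domain of a generator is the range of its resolvent. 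You instead differentiate $T_t^{stop}f$ at $t=0$ via the time-$t$ Dynkin identity, which is more transparent about where the boundary condition enters. The paper also does not re-derive the Feller property of the stopped process but cites \cite[Theorem 6.2.1]{KV0}; note that the regularity in expectation is really what secures continuity of $x\mapsto T_t^{stop}f(x)$ up to the boundary, rather than strong continuity in $t$, which holds for more elementary reasons.

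The one step you should shore up is the passage from your limit to membership in $\mathfrak{D}_{[a,b]*}^{stop}$: dividing Dynkin's formula by $t$ and applying dominated convergence yields only \emph{pointwise} convergence of $(T_t^{stop}f-f)/t$ to $-L_{[a,b]*}f$, whereas the generator is defined as a limit in the sup-norm. This is repairable by a standard argument — the difference quotients are uniformly bounded by $\|L_{[a,b]*}f\|$ and the pointwise limit is continuous, so one may pass to the Laplace transform and verify $R_{\lambda}^{[a,b]*}\bigl(\lambda f+L_{[a,b]*}f\bigr)=f$ by dominated convergence, which places $f$ in the strong domain — but as written the conclusion does not yet follow, and the paper's resolvent formulation sidesteps the issue entirely. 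Part (ii) of your proof matches the paper, which simply states that it follows by the same arguments.
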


\begin{proof}
See proof in Section \ref{sec:8}.
\end{proof}

\begin{remark}\label{paths}  
The operator $-L_{[a,b]*}$ can  be obtained from the generator $(L,\mathfrak{D}_L)$ of a  Feller process, say $X_x$,  given by\begin{align}\label{Lfree}
(L f )(x) = \int_{-\infty}^{\infty} (\,f(x+y) - f(x) \,) \nu(x,y)dy + \gamma (x) f'(x) + \alpha (x)f''(x),
\end{align}
  by modifying it in such a way that it forces  the   jumps aimed to be out of the interval $(a,b)$ to  land at the nearest (boundary) point (see also \cite{KVFDE}). If, instead, the process is killed upon leaving $(a,b)$, then the corresponding process has a generator related to the operator $-L_{[a,b]}$.
Thus, when starting at the same state $x \in (a,b)$, it holds that the paths of the processes $X_x$, $\hat{X}_x$, $X_x^{[a,b]*}$ and $X_x^{[a,b]}$ coincide before their first exit time from the interval $(a,b)$. Hence, the first exit time in all cases will always be denoted by $\tau_{(a,b)}(x)$. We refer to the processes $X_x$, $\hat{X}_x$, $X^{[a,b]*}_x$ and $X_x^{[a,b]}$ as the \textit{underlying} process, the \textit{interrupted} process, the \textit{stopped} process and the  \textit{killed} process, respectively.
\end{remark}

\section{Two-sided equations involving RL type operators} \label{sec:5}
\setcounter{section}{5}
\setcounter{equation}{0}\setcounter{theorem}{0}

Let us now study the equation (\ref{Eq0-C}) for which we will also use the short notation $\left(-L_{[a,b]*}, \lambda, g, u_a, u_b\right)$. We shall  start with the boundary value problem with zero boundary conditions: $u_a = 0 = u_b$. Thus, due to the relationship between Caputo and RL type operators (see Remark \ref{RLDefinition}),  the two-sided Caputo type operator $-L_{[a,b]*}$ can be replaced with the  RL type operator $-L_{[a,b]}$, so that the equation $\left(-L_{[a,b]}, \lambda, g, 0,0\right)$ will be called the two-sided RL type equation.

\begin{definition}(\textbf{Solutions to RL type equations})\label{Def3Sol}
 Let $g \in B[a,b]$ and $\lambda \ge 0$. A function $u \in C_0[a,b]$ is said to solve  the linear equation of RL type
$(-L_{[a,b]}, \lambda, g,0,0)$ as $(i)$ a  \textit{solution in the domain of the generator}   if $u$ is a solution belonging to $\mathfrak{D}_{[a,b]}^{kill}$; $(ii)$  a  \textit{generalized solution}  if  for all sequence of functions $g_n \in C_0[a,b]$ such that $\sup_n ||g_n|| < \infty$ and $\lim_{n\to \infty} g_n \to g$ a.e.,  it holds that  $u(x) =  \lim_{n\to \infty} w_n(x) $ for all $x \in [a,b]$, where $w_n$ is the unique solution (in the domain of the generator)  to the RL type problem  $(-L_{[a,b]}, \lambda, g_n, 0,0)$.
 \end{definition}

 \begin{definition} For $g \in B[a,b]$ and $\lambda \ge 0$, we say that the equation $(-L_{[a,b]}, \lambda, g,0,0)$ is \textit{well-posed in the generalized sense} if it has a unique generalized solution according to Definition   \ref{Def3Sol}.
 \end{definition}

\begin{theorem} {\rm (\textbf{Well-posedness})} \label{WellP-RL2}
Let $\nu$  be a function defined in terms of two functions $\nu_+$ and $\nu_{-}$  via the equalities in (\ref{nu+-}). Let $\lambda \ge 0$ and  assume that the assumptions of Theorem \ref{L0diffusion-int} hold.   Let $\hat{R}_{\lambda}$ denote the resolvent   operator (or the potential operator if $\lambda = 0$) of the  process $\hat{X}_x$.
\begin{enumerate}
\item [(i)] If $g \in C_0[a,b]$ and  $\left ( \hat{R}_{\lambda}g\right) (x) = 0$ for $x \in \{a,b\}$, then there exists a unique solution in the domain of the generator, $u \in C_0[a,b]$,  to the two-sided RL type equation $(-L_{[a,b]}, \lambda, g, 0,0)$
 given by $u(x) = R_{\lambda}^{[a,b]}g (x)$, where $R_{\lambda}^{[a,b]}$ denotes the resolvent operator (or potential operator if $\lambda = 0$) of the process $X_x^{[a,b]}$.
 \item [(ii)] For any $g \in B[a,b]$, the equation $(-L_{[a,b]},\lambda, g, 0,0)$  has a unique generalized solution $u \in C_0[a,b]$ given by
\begin{equation}\label{StocR-RL2}
u(x) = \mathbf{E} \left [ \int_0^{\tau_{(a,b)} (x)} e^{-\lambda t} g \left( X_{x} (t)\right) dt\right ],
\end{equation}
where $\tau_{(a,b)} (x)$ denotes  the first exit time from the interval $(a,b)$ of the underlying process $X_x$  generated by the operator (\ref{Lfree}).
\item [(iii)] The solution in  (\ref{StocR-RL2}) depends continuously on the function $g$.
\end{enumerate}
\end{theorem}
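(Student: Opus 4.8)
The plan is to exhibit the solution explicitly as a resolvent of the killed process $X^{[a,b]}$ applied to $g$ and to pass between the analytic equation and this probabilistic object through the hierarchy of operators $-L_{[a,b]*}$, $-L_{stop}$, $-L_{kill}$, $-L_{[a,b]}$ built in Theorems \ref{L0diffusion-int} and \ref{L0diffusion}. For part (i) I would first note that, by Hille--Yosida applied to the Feller generator $(-L_{[a,b]*},\hat{\mathfrak{D}}_{*})$ of $\hat X$ (Theorem \ref{L0diffusion-int}(i)), the function $\hat u:=\hat R_\lambda g$ lies in $\hat{\mathfrak{D}}_{*}$ and satisfies $-L_{[a,b]*}\hat u=\lambda\hat u-g$ on all of $[a,b]$. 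Evaluating this identity at the terminals, and using both that $g\in C_0[a,b]$ vanishes there and the hypothesis $\hat R_\lambda g(a)=\hat R_\lambda g(b)=0$, we obtain $(-L_{[a,b]*}\hat u)(x)=0$ for $x\in\{a,b\}$. Theorem \ref{L0diffusion}(i) then puts $\hat u$ into $\mathfrak{D}_{[a,b]*}^{stop}$ with $-L_{stop}\hat u=-L_{[a,b]*}\hat u$; since $\hat u$ also vanishes at $\{a,b\}$, Theorem \ref{L0diffusion}(ii) puts $\hat u$ into $\mathfrak{D}_{[a,b]}^{kill}$ with $-L_{kill}\hat u=-L_{[a,b]}\hat u$, where I have used Remark \ref{RLDefinition} to identify the Caputo- and the RL-type operators on functions that vanish at the terminals. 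Hence $\hat u\in C_0[a,b]\cap\mathfrak{D}_{[a,b]}^{kill}$ and $-L_{kill}\hat u=\lambda\hat u-g$: $\hat u$ is a solution in the domain of the generator.

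To identify $\hat u$ with $R_\lambda^{[a,b]}g$ I would decompose $\hat R_\lambda g(x)$ at the first exit time $\tau_{(a,b)}(x)$, using the strong Markov property of $\hat X$ and the fact (Remark \ref{paths}) that the paths of $\hat X_x$ and $X_x^{[a,b]}$ agree up to $\tau_{(a,b)}(x)$; this gives $\hat R_\lambda g(x)=R_\lambda^{[a,b]}g(x)+\mathbf{E}\big[e^{-\lambda\tau_{(a,b)}(x)}\hat R_\lambda g(\hat X_x(\tau_{(a,b)}(x)))\big]$, and the second term vanishes since $\hat X_x(\tau_{(a,b)}(x))\in\{a,b\}$ and $\hat R_\lambda g$ is zero there. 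Uniqueness follows from the injectivity of $\lambda I-(-L_{kill})$ on $\mathfrak{D}_{[a,b]}^{kill}$: if $v\in\mathfrak{D}_{[a,b]}^{kill}$ is another solution then $-L_{kill}v=\lambda v-g$, so $(\lambda I-(-L_{kill}))(\hat u-v)=0$; for $\lambda>0$ this operator is invertible because $-L_{kill}$ generates a strongly continuous contraction (sub-Markov) semigroup, while for $\lambda=0$ the bound $\mathbf{E}[\tau_{(a,b)}(x)]<\infty$ from Theorem \ref{L0diffusion-int}(ii) shows that the killed semigroup tends to $0$ as $t\to\infty$ and that the potential operator $R_0^{[a,b]}$ exists, again forcing injectivity. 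For part (ii), set $u(x):=\mathbf{E}\big[\int_0^{\tau_{(a,b)}(x)}e^{-\lambda t}g(X_x(t))\,dt\big]=R_\lambda^{[a,b]}g(x)$; this is bounded since $\mathbf{E}[\tau_{(a,b)}(x)]<\infty$, vanishes at $\{a,b\}$ because $\tau_{(a,b)}(a)=\tau_{(a,b)}(b)=0$, and is continuous on $[a,b]$ by the usual regularity of the Green (potential) operator of $X^{[a,b]}$, so $u\in C_0[a,b]$. Taking any admissible sequence $g_n\in C_0[a,b]$ with $\sup_n\|g_n\|<\infty$ and $g_n\to g$ a.e., the associated solution in the domain of the generator is $w_n=R_\lambda^{[a,b]}g_n$ (part (i)), i.e.\ $w_n(x)=\mathbf{E}\big[\int_0^{\tau_{(a,b)}(x)}e^{-\lambda t}g_n(X_x(t))\,dt\big]$, and $w_n(x)\to u(x)$ for every $x$ by dominated convergence: the integrand is dominated by $(\sup_n\|g_n\|)e^{-\lambda t}$, integrable on $[0,\tau_{(a,b)}(x)]$ with finite expectation, while $g_n(X_x(t))\to g(X_x(t))$ for Lebesgue-a.e.\ $t$, $\mathbf{P}$-almost surely, because the non-degenerate diffusion part ($\alpha>0$) makes the occupation measure of $X_x$ absolutely continuous with respect to Lebesgue measure, so the null set on which $g_n\not\to g$ is never charged. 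Since the limit does not depend on the approximating sequence, $u$ is the unique generalized solution.

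Part (iii) reduces to the linear estimate $|u_{g_1}(x)-u_{g_2}(x)|\le\|g_1-g_2\|\,\mathbf{E}[\tau_{(a,b)}(x)]\le\|g_1-g_2\|\sup_{y\in[a,b]}\mathbf{E}[\tau_{(a,b)}(y)]$, with the supremum finite because $y\mapsto\mathbf{E}[\tau_{(a,b)}(y)]$ is a bounded continuous function (the $0$-potential of the constant $1$, vanishing at the terminals by regularity in expectation, Theorem \ref{L0diffusion-int}(ii)). I expect the real work to lie not in any single estimate but in the careful bookkeeping of which of the four operators acts on which domain and of the precise order in which Theorems \ref{L0diffusion-int}, \ref{L0diffusion} and Remark \ref{RLDefinition} must be chained (in particular, pinning down exactly which $g\in C_0[a,b]$ admit a solution in the domain of the generator, so that the class of admissible approximating sequences in part (ii) is non-empty), together with the measure-theoretic point that a.e.\ convergence of $g_n$ suffices to pass to the limit in the stochastic representation — which genuinely rests on the absolute continuity of the occupation measure, i.e.\ on the standing assumption $\alpha>0$.
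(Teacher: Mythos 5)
Your argument is correct and follows essentially the same route as the paper: solve the resolvent equation for the interrupted process $\hat X$ via Hille--Yosida, transfer the solution to the killed process through Theorem \ref{L0diffusion} and Remark \ref{RLDefinition}, identify it with $R_\lambda^{[a,b]}g$ to get the stochastic representation, and obtain parts (ii) and (iii) by dominated convergence and the bound $\|u-u_n\|\le\|g-g_n\|\sup_x\mathbf{E}[\tau_{(a,b)}(x)]$. The only place you go beyond the paper is in spelling out why a.e.\ convergence of $g_n$ suffices in part (ii) --- via absolute continuity of the occupation measure of $X_x$, guaranteed here by $\alpha>0$ --- a detail the paper's bare appeal to ``Fubini and dominated convergence'' leaves implicit; this is a refinement, not a different approach.
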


\begin{proof}
{(i)}\, Theorem \ref{L0diffusion-int} implies that  $(\,-L_{[a,b]*}\,, \hat{\mathfrak{D}}_{*} \, )$ generates a Feller process   $\hat{X}$ and a   strongly continuous semigroup on $C[a,b]$. Then,  the resolvent equation $-L_{[a,b]*}u = \lambda u - g$ has a  unique solution $u \in \hat{\mathfrak{D}}_{*}$    given   by  the resolvent operator $\hat{R}_{\lambda}g$ for $\lambda > 0$ and for any $g \in C[a,b]$ \cite[Theorem 1.1]{dynkin1965}.
 In particular, the latter statement holds for   $g\in C_0[a,b]$ such that $\left ( \hat{R}_{\lambda}g\right) (x) = 0$ for $x \in \{a,b\}$. Further, Theorem  \ref{L0diffusion} implies that  $ \hat{R}_{\lambda}g =  R_{\lambda}^{[a,b]}g $, so that  $ -L_{[a,b]*} u = -L_{[a,b]} u$. Hence, $u$ is a solution to $( -L_{[a,b]}, g, \lambda ,0,0)$ belonging to $\mathfrak{D}_{[a,b]}^{kill}$, as required.

Since  $\tau_{(a,b)}(x):= \inf \{t \ge 0: X_x^{[a,b]}  (t) \notin (a,b)\}$  is the lifetime of the process $X_x^{[a,b]}$,  the definition of $R_{\lambda}^{[a,b]}$ and Fubini's theorem  imply
\begin{equation}\label{uRn} R_{\lambda}^{[a,b]}g(x) =  \mathbf{E}  \left [  \int_0^{\tau_{(a,b)} (x)}  e^{-\lambda t}g\left (X_x^{[a,b]} (t)\right) dt\right ], \end{equation}
yielding (\ref{StocR-RL2}) as the paths of $X_x^{[a,b]}$ and $X_x$ coincide before the time $\tau_{(a,b)}(x)$.
If $\lambda = 0$, then  setting $\lambda= 0$ in (\ref{uRn}) implies (as $\tau_{(a,b)} (x)$ has a finite expectation) that \[||R_0^{[a,b]} g || \le \sup_{x \in [a,b]} \mathbf{E} \left [  \tau_{(a,b)}(x)\right ] < +\infty.\]  Therefore,   the potential operator $R_0^{[a,b]}g$ provides the unique solution for $\lambda=0$ belonging to the domain $\mathfrak{D}_{[a,b]}^{kill}$,  \cite[Theorem 1.1']{dynkin1965}.

\vskip 2pt

$(ii)$\, Take $g \in B[a,b]$ and any sequence   $\{g_n\}$ satisfying Definition \ref{Def3Sol}.   Fubini's theorem and the dominated convergence theorem  applied to (\ref{uRn})  imply   the  convergence of  $\lim_{n \to \infty}  R_{\lambda}^{[a,b]}g_n(x)=:u(x)$, which in turn implies that $u$  is the unique generalized solution to $(-L_{[a,b]}, \lambda, g,0,0)$.

\smallskip

$(iii)$ Follows from the fact that,  for any $\lambda \ge 0$, the equality  (\ref{StocR-RL2}) implies   \begin{equation}\label{boundGgn}
 ||u-u_n||  \le ||g-g_n|| \sup_{x \in [a,b]} \mathbf{E} \left [Ê \tau_{(a,b)}(x) \right ],\end{equation}
 for the solutions $u$ and $u_n$ to equations $(-L_{[a,b]}, \lambda, g, 0,0)$ and $(-L_{[a,b]}, \lambda, g_n,$ $0,0)$, respectively.
\end{proof}

\section{Two-sided equations involving Caputo type operators} \label{sec:6}
\setcounter{section}{6}
\setcounter{equation}{0}\setcounter{theorem}{0}

We now turn our attention to  the well-posedness for the Caputo type equation with general boundary conditions.  We will use that  both operators $-L_{[a,b]*}$ and $-L_{[a,b]}$  coincide on functions $h$ vanishing on $\{a,b\}$.

Suppose  that $u$ solves (\ref{Eq0-C}).  Take any function  $\phi \in \mathfrak{D}_{[a,b]*}^{stop}$ satisfying $\phi(a) = u_a$ and $\phi(b) =u_b$. By Theorem \ref{L0diffusion} we can take, for example, $\phi \in C^2[a,b]$ such that $\phi'\in C_0[a,b]$ with $\left(-L_{[a,b]*} \phi \right) (x) = 0 $ for $x \in \{a,b\}$ and $\phi(a) = u_a$ and $\phi(b) = u_b$.
Define $w(x) := u(x) - \phi (x)$, $x \in [a,b]$,  then
\[ -L_{[a,b]} w(x) = -L_{[a,b]*} w(x)  = -L_{[a,b]*} u(x) + L_{[a,b]*} \phi (x),  \]
as $w$ vanishes at the boundary. Hence,
\begin{align}\nonumber
 -L_{[a,b]} w(x) &= \lambda u(x) - g(x) + L_{[a,b]*} \phi (x), \\
 &= \lambda w(x) + \lambda \phi(x) - g(x) + L_{[a,b]*} \phi(x), \label{E33}\end{align}
yielding the RL type  equation  $(-L_{[a,b]} ,\,\lambda,\, g-L_{[a,b]*}  \phi - \lambda \phi,\,0,\,0)$ for the function $w$. Therefore,   if $w$ is the (possibly generalized) solution to (\ref{E33}), then   $u = w + \phi$ can be considered as  a generalized solution to the  Caputo type equation (\ref{Eq0-C}). This motivates the definition below.

 \begin{definition}(\textbf{Solutions to Caputo type equations})\label{Def3SolC}
 Let $g \in B[a,b]$ and $\lambda \ge 0$.  A function $u \in C[a,b]$ is said to solve  the linear equation (\ref{Eq0-C})  as
 $(i)$ a  \textit{solution in the domain of the generator}   if $u$ is a solution belonging to $\mathfrak{D}_{[a,b]*}^{stop}$; $(ii)$  a  \textit{generalized solution}  if   $u$ can be written as $u= \phi+ w$, where $w$ is the (possibly generalized) solution to the RL type problem  \[(-L_{[a,b]},\, \lambda, \,g-L_{[a,b]*}  \phi - \lambda \phi,\, 0,\,0)\] with $\phi \in C^2[a,b]$ satisfying that  $\phi' \in C_0[a,b]$, $\left( -L_{[a,b]*} \phi\right ) (x) = 0$ in $\{a,b\}$,  $\phi(a) = u_a$ and $\phi(b) = u_b$.
 \end{definition}

 \begin{definition} For  $g \in B[a,b]$ and $\lambda \ge 0$.
We say that the two-sided linear equation (\ref{Eq0-C}) is \textit{well-posed in the generalized sense} if it has a unique generalized solution according to Definition \ref{Def3SolC}.  
\end{definition}

\begin{theorem}
If a generalized solution $u= w + \phi$ exists for the Caputo type linear equation (\ref{Eq0-C}) with $w$ and $\phi$ as in Definition \ref{Def3SolC}, then the solution $u$ is unique and independent of $\phi$.
\end{theorem}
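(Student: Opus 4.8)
The plan is to show that the difference of two candidate generalized solutions is zero, exploiting that the RL-type problem with zero data has only the zero solution (by Theorem \ref{WellP-RL2}(ii), which gives uniqueness of the generalized solution, with stochastic representation \eqref{StocR-RL2}), and that any two admissible auxiliary functions $\phi$ differ by a function vanishing at the boundary. First I would set up the comparison: suppose $u = w + \phi$ and $\tilde u = \tilde w + \tilde\phi$ are both generalized solutions in the sense of Definition \ref{Def3SolC}, where $\phi,\tilde\phi \in C^2[a,b]$ with $\phi',\tilde\phi' \in C_0[a,b]$, $(-L_{[a,b]*}\phi)(x) = (-L_{[a,b]*}\tilde\phi)(x) = 0$ on $\{a,b\}$, $\phi(a)=\tilde\phi(a)=u_a$, $\phi(b)=\tilde\phi(b)=u_b$, and $w$, $\tilde w$ are the (generalized) solutions to the respective RL-type problems $(-L_{[a,b]},\lambda,g - L_{[a,b]*}\phi - \lambda\phi,0,0)$ and $(-L_{[a,b]},\lambda,g - L_{[a,b]*}\tilde\phi - \lambda\tilde\phi,0,0)$.

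Next I would introduce $\psi := \phi - \tilde\phi$. Since $\phi$ and $\tilde\phi$ agree at $a$ and $b$, $\psi$ vanishes at the boundary, so by the remark preceding Definition \ref{Def3SolC} (that $-L_{[a,b]}$ and $-L_{[a,b]*}$ coincide on functions vanishing on $\{a,b\}$) we have $-L_{[a,b]*}\psi = -L_{[a,b]}\psi$ on $(a,b)$, and moreover $\psi$ itself belongs to $\mathfrak D_{[a,b]}^{kill}$: indeed $\psi \in C^2[a,b]$, $\psi' \in C_0[a,b]$, $\psi$ vanishes at $\{a,b\}$, and $(-L_{[a,b]*}\psi)(x) = 0$ on $\{a,b\}$, so Theorem \ref{L0diffusion}(i)--(ii) places $\psi$ in $\mathfrak D_{[a,b]*}^{stop}$ and then in $\mathfrak D_{[a,b]}^{kill}$. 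Consequently $\psi$ is a solution in the domain of the generator of the RL-type equation $(-L_{[a,b]},\lambda,\,-L_{[a,b]}\psi + \lambda\psi,\,0,0)$; but note $-L_{[a,b]}\psi + \lambda\psi = \lambda\psi + L_{[a,b]*}\psi + \lambda\psi - \lambda\psi = \lambda\psi + L_{[a,b]*}\psi$, i.e. precisely the source term $h := \lambda\psi + L_{[a,b]*}\psi \in C_0[a,b]$ for which $\psi = \hat R_\lambda h = R_\lambda^{[a,b]} h$ by the uniqueness in Theorem \ref{WellP-RL2}(i).

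Then I would compare the two RL source terms. The source for $w$ minus the source for $\tilde w$ equals $-(L_{[a,b]*}\phi + \lambda\phi) + (L_{[a,b]*}\tilde\phi + \lambda\tilde\phi) = -(L_{[a,b]*}\psi + \lambda\psi) = -h$. By linearity of the RL-type problem (which follows from the linearity of the stochastic representation \eqref{StocR-RL2} and uniqueness of generalized solutions, Theorem \ref{WellP-RL2}(ii)--(iii)), the generalized solution $w - \tilde w$ to $(-L_{[a,b]},\lambda,-h,0,0)$ coincides with $-\psi$ (the negative of the solution to $(-L_{[a,b]},\lambda,h,0,0)$, which we identified as $\psi$ above). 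Hence $w - \tilde w = -\psi = -(\phi - \tilde\phi)$, and therefore
\[
u - \tilde u = (w + \phi) - (\tilde w + \tilde\phi) = (w - \tilde w) + (\phi - \tilde\phi) = -\psi + \psi = 0.
\]
This simultaneously gives uniqueness and independence of the choice of $\phi$.

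The main obstacle I anticipate is the bookkeeping in the two preceding paragraphs: one must be careful that $\psi$ really lands in $\mathfrak D_{[a,b]}^{kill}$ (so that it is a genuine ``solution in the domain of the generator,'' not merely a generalized one) — this requires checking the chain of inclusions in Theorem \ref{L0diffusion} with the correct hypotheses on $\psi$ — and that the generalized-solution map $g \mapsto w$ for RL-type problems is honestly linear, which rests on the explicit representation \eqref{StocR-RL2} together with the approximation procedure in Definition \ref{Def3Sol}. Once these two points are in place, the cancellation $u - \tilde u = 0$ is immediate and purely algebraic.
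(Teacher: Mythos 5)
Your proof is correct, and it reaches the conclusion by a route that differs in execution from the paper's. The paper simply forms $u:=u_1-u_2$, observes that $u$ vanishes at $\{a,b\}$ so that $-L_{[a,b]}u=-L_{[a,b]*}u=\lambda u$, and invokes uniqueness for the homogeneous RL problem $(-L_{[a,b]},\lambda,0,0,0)$ from Theorem \ref{WellP-RL2}; this is shorter but applies the operator directly to the difference of two \emph{generalized} solutions, which is a slightly formal step since such a difference need not lie in the domain of the generator. You instead difference the two decompositions separately: $\psi=\phi-\tilde\phi$ is smooth, vanishes at the boundary together with $-L_{[a,b]*}\psi$, hence lands in $\mathfrak D_{[a,b]}^{kill}$ by Theorem \ref{L0diffusion} and is the genuine (domain-of-the-generator) solution of the RL problem with source $h=\lambda\psi+L_{[a,b]*}\psi\in C_0[a,b]$; meanwhile $w-\tilde w$ is identified as the generalized solution with source $-h$ via the linearity of the stochastic representation \eqref{StocR-RL2}, so the two contributions cancel. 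What your version buys is that every application of $-L_{[a,b]*}$ occurs on a function certified to be in the relevant domain, at the cost of the extra bookkeeping you flagged (the chain $\hat{\mathfrak D}_*\supset\mathfrak D_{[a,b]*}^{stop}\supset\mathfrak D_{[a,b]}^{kill}$ for $\psi$, and the identification $\psi=\hat R_\lambda h=R_\lambda^{[a,b]}h$, which also disposes of the boundary condition $\hat R_\lambda h=0$ on $\{a,b\}$ needed in Theorem \ref{WellP-RL2}(i)); the paper's version buys brevity. Both ultimately rest on the same uniqueness statement for the zero-boundary RL problem, and both cover $\lambda=0$ via the potential operator.
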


\begin{proof}
Suppose that there are two different solutions $u_j$  for $j\in\{1,2\}$ to equation (\ref{Eq0-C}).  Then,  $u_j = w_j + \phi_j$,  where $w_j$ is the unique solution (possibly generalized) to the RL type equation $(-L_{[a,b]} ,\,\lambda,\, g-L_{[a,b]*} \phi_j - \lambda \phi_j,\,0,\, 0)$ for some  $\phi_j$   satisfying the conditions stated in Definition \ref{Def3SolC}.
Define $u(x) := u_1(x) - u_2 (x)$ for $x \in [a,b]$,  then
\begin{align*}
-L_{[a,b]}  u(x) =-L_{[a,b]*}  u(x) &= -L_{[a,b]*}  u_1(x) + L_{[a,b]*}  u_2(x)
= \lambda u(x).
\end{align*}
Hence, $u$ solves the RL type equation $(-L_{[a,b]} , \lambda, \, g= 0,  0,0)$ whose unique solution (by Theorem  \ref{WellP-RL2}) is $u \equiv 0$, which implies the uniqueness and so the  independence of $\phi$.
\end{proof}

\begin{theorem} \label{WellP-C2} {\rm (\textbf{Well-posedness})}
Let $\lambda \ge 0$. Suppose that the assumptions of Theorem \ref{WellP-RL2} hold.
\begin{enumerate}
\item [(i)] For any $g \in B[a,b]$, the two-sided equation (\ref{Eq0-C})
is well-posed in the generalized sense. The solution admits the stochastic representation
\begin{align}\nonumber
 u&(x) =  u_a\mathbf{E} \left [  e^{-\lambda \tau_{(a,b)} (x)} \mathbf{1}_{\{X_x(\tau_{(a,b)}(x)) \le a  \}} \right ] \\ &+ u_b\mathbf{E} \left [  e^{-\lambda \tau_{(a,b)} (x)} \mathbf{1}_{\{X_x (\tau_{(a,b)} (x)) \ge b  \}} \right ] +    \mathbf{E} \left [   \int_0^{\tau_{(a,b)} (x) } e^{-\lambda t} g \left( X_x (t)\right )dt\right ],\label{StocR-C2}
 \end{align}
 \noindent where  $\tau_{(a,b)}(x)$ and $X_x$  are as in Theorem \ref{WellP-RL2}.
 \item [(ii)] If $g \in C[a,b]$ satisfying that $g(a) = \lambda u_a$, $g(b) = \lambda u_b$ and $ \lambda \hat{R}_{\lambda} g (x) = g(x)$ for $ x \in \{a,b\}$, then  the  solution   (\ref{StocR-C2})  belongs to  $\mathfrak{D}_{[a,b]*}^{stop}$.
\item [(iii)] The solution to (\ref{Eq0-C}) depends continuously on the function $g$ and on the boundary conditions $\{u_a, u_b\}$.
 \end{enumerate}
 \end{theorem}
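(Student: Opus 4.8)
The plan is to reduce everything to the already-established well-posedness of the RL type equation (Theorem \ref{WellP-RL2}) via the substitution $u = w + \phi$ introduced before Definition \ref{Def3SolC}. For part (i), I would first fix any auxiliary function $\phi \in C^2[a,b]$ with $\phi' \in C_0[a,b]$, $\phi(a) = u_a$, $\phi(b) = u_b$ and $(-L_{[a,b]*}\phi)(x) = 0$ for $x \in \{a,b\}$; the existence of such a $\phi$ is guaranteed because $C^2$ functions with $C_0$ first derivative lie in $\hat{\mathfrak{D}}_{*} \subset \mathfrak{D}_{[a,b]*}^{stop}$ (Theorem \ref{L0diffusion-int} and Theorem \ref{L0diffusion}(i)), and the interpolation constraints are finitely many linear conditions one can satisfy with, say, a cubic polynomial. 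Then by Theorem \ref{WellP-RL2}(ii), the RL type equation $(-L_{[a,b]}, \lambda, g - L_{[a,b]*}\phi - \lambda\phi, 0, 0)$ has a unique generalized solution $w \in C_0[a,b]$, so $u = w + \phi$ is a generalized solution of (\ref{Eq0-C}); uniqueness and independence of $\phi$ are exactly the content of the preceding theorem, which I invoke.

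The substantive part of (i) is deriving the stochastic representation (\ref{StocR-C2}). I would start from $u = w + \phi$ and the representation (\ref{StocR-RL2}) for $w$ applied to the right-hand side $g - L_{[a,b]*}\phi - \lambda\phi$, giving
\begin{equation}\nonumber
u(x) = \phi(x) + \mathbf{E}\left[\int_0^{\tau_{(a,b)}(x)} e^{-\lambda t}\bigl(g - L_{[a,b]*}\phi - \lambda\phi\bigr)\bigl(X_x(t)\bigr)\,dt\right].
\end{equation}
The key identity is a Dynkin-type formula: since $\phi \in \hat{\mathfrak{D}}_{*}$, the process $e^{-\lambda t}\phi(\hat{X}_x(t)) + \int_0^t e^{-\lambda s}(L_{[a,b]*}\phi + \lambda\phi)(\hat{X}_x(s))\,ds$ is a martingale, and optional stopping at $\tau_{(a,b)}(x)$ (finite in expectation by Theorem \ref{L0diffusion-int}(ii)) gives
\begin{equation}\nonumber
\phi(x) = \mathbf{E}\left[e^{-\lambda\tau_{(a,b)}(x)}\phi\bigl(X_x(\tau_{(a,b)}(x))\bigr)\right] + \mathbf{E}\left[\int_0^{\tau_{(a,b)}(x)} e^{-\lambda s}(L_{[a,b]*}\phi + \lambda\phi)\bigl(X_x(s)\bigr)\,ds\right].
\end{equation}
Substituting this back cancels the $L_{[a,b]*}\phi + \lambda\phi$ integral term, leaving $u(x) = \mathbf{E}[e^{-\lambda\tau_{(a,b)}(x)}\phi(X_x(\tau_{(a,b)}(x)))] + \mathbf{E}[\int_0^{\tau_{(a,b)}(x)} e^{-\lambda t} g(X_x(t))\,dt]$. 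Finally, because the interrupted/stopped process forces overshoots to land at the nearest boundary point (Remark \ref{paths}), $X_x(\tau_{(a,b)}(x)) \in \{a,b\}$ a.s., and $\phi(a) = u_a$, $\phi(b) = u_b$ split the first expectation into the two indicator terms of (\ref{StocR-C2}). I expect the justification of the martingale/optional-stopping step to be the main obstacle: one must check uniform integrability or use the $L^1$ bound on $\tau_{(a,b)}(x)$ together with boundedness of $\phi$ and of $L_{[a,b]*}\phi$ on $[a,b]$ to legitimately stop at the (unbounded) random time.

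For part (ii), I would check when $u$ given by (\ref{StocR-C2}) actually lies in $\mathfrak{D}_{[a,b]*}^{stop}$. Writing $u = w + \phi$ with $w = R_\lambda^{[a,b]}(g - L_{[a,b]*}\phi - \lambda\phi)$, by Theorem \ref{WellP-RL2}(i) the condition for $w$ to be a genuine solution in $\mathfrak{D}_{[a,b]}^{kill}$ (equivalently, for $u \in \mathfrak{D}_{[a,b]*}^{stop}$ via Theorem \ref{L0diffusion}(i)) is that $g - L_{[a,b]*}\phi - \lambda\phi \in C_0[a,b]$ and $\hat{R}_\lambda(g - L_{[a,b]*}\phi - \lambda\phi)$ vanishes at $\{a,b\}$. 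Since $(-L_{[a,b]*}\phi)(x) = 0$ at $x \in \{a,b\}$ and $\phi(a) = u_a$, $\phi(b) = u_b$, continuity of $g - L_{[a,b]*}\phi - \lambda\phi$ up to the boundary with zero boundary value is equivalent to $g(a) = \lambda u_a$, $g(b) = \lambda u_b$; and the resolvent condition $\hat{R}_\lambda(g - L_{[a,b]*}\phi - \lambda\phi)(x) = 0$ at $x\in\{a,b\}$ reduces, using $\lambda\hat{R}_\lambda(L_{[a,b]*}\phi + \lambda\phi) = \lambda\phi - (-L_{[a,b]*}\phi)$ and evaluating at the boundary, precisely to $\lambda\hat{R}_\lambda g(x) = g(x)$ at $x \in \{a,b\}$ — the stated hypothesis. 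Part (iii) then follows immediately: from (\ref{StocR-C2}) and the bound $\sup_x \mathbf{E}[\tau_{(a,b)}(x)] < \infty$ together with $\mathbf{E}[e^{-\lambda\tau_{(a,b)}(x)}\mathbf{1}_{\{\cdot\le a\}}] + \mathbf{E}[e^{-\lambda\tau_{(a,b)}(x)}\mathbf{1}_{\{\cdot\ge b\}}] \le 1$, one gets $\|u - \tilde u\| \le |u_a - \tilde u_a| + |u_b - \tilde u_b| + \|g - \tilde g\|\sup_x\mathbf{E}[\tau_{(a,b)}(x)]$, exactly as in (\ref{boundGgn}), so the solution map is (Lipschitz) continuous in $(g, u_a, u_b)$.
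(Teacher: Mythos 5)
Your proposal is correct and follows essentially the same route as the paper: decompose $u=w+\phi$ with $\phi$ as in Definition \ref{Def3SolC}, apply Theorem \ref{WellP-RL2} to the RL equation for $w$, cancel the $(\lambda+L_{[a,b]*})\phi$ integral via the martingale/Doob optional-stopping (Dynkin) identity, and split the boundary term into the two indicator expectations; parts (ii) and (iii) also match the paper's reductions (your intermediate resolvent identity in (ii) should read $\hat R_\lambda(\lambda\phi+L_{[a,b]*}\phi)=\phi$, but this does not affect the conclusion since you only use it at the boundary where $-L_{[a,b]*}\phi$ vanishes). The only difference is that you explicitly flag the uniform-integrability justification for stopping at the unbounded time $\tau_{(a,b)}(x)$, which the paper leaves implicit.
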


 \begin{proof}
   $(i)$\, Theorem  \ref{L0diffusion-int} implies that  the operator $(\,-L_{[a,b]*}\,,\hat{ \mathfrak{D}}_{*} \, )$ generates a Feller   process   $\hat{X}$ on $[a,b]$ and also ensures that $\tau_{(a,b)}(x)$ has a finite  expectation. Let us take any function $\phi \in C^2[a,b]$ satisfying the conditions from  Definition \ref{Def3SolC}.  Then  (by Theorem \ref{WellP-RL2})
  the generalized solution $w$ to the RL type equation $(-L_{[a,b]}, g - \lambda \phi - L_{[a,b]*} \phi, \lambda, 0,0)$ is given by $w = I - II$, where
 \begin{align*}
I &:= \mathbf{E} \left [ \int_0^{\tau_{(a,b)} (x)} e^{-\lambda t}   g\left( X_{x}^{[a,b]} (t)\right)dt\right ]  \\
II &:=  \mathbf{E} \left [ \int_0^{\tau_{(a,b)} (x) } e^{-\lambda t}    ( \lambda  + L_{[a,b]*}) \phi  \left( X_x^{[a,b]} (t)\right)    dt\right ].
 \end{align*}
Thus, $u = w + \phi$ is (by definition) the generalized solution to (\ref{Eq0-C}).  Using the  martingale
 \[Y(r):= e^{-\lambda r} \phi \left (   X_{x}^{[a,b]*} (r)\right) + \int_0^r e^{-\lambda s} (\lambda + L_{[a,b]*}) \phi \left (  X_{x}^{[a,b]*} (s)\right ) ds\]
 and the stopping time $\tau_{(a,b)}(x)$, Doob's stopping theorem yields
 \[ II = \phi(x) - \mathbf{E} \left [Ê  e^{-\lambda \tau_{(a,b)} (x) }  \phi \left (X_{x}^{[a,b]*} \left ( \tau_{(a,b)} (x) \right ) \right )\right ]\]
 which in turn implies
\begin{align}\nonumber
 u(x) =  \mathbf{E} &\left [  e^{-\lambda \tau_{(a,b)} (x)}  u \left(  X_x^{[a,b]*} \left( \tau_{(a,b)} (x) \right) \right ) \right ] \,\, + \\ &\quad\quad\quad
 + \quad   \mathbf{E} \left [   \int_0^{\tau_{(a,b)} (x) }\! e^{-\lambda t} g \left( X_x^{[a,b]*} (t)\right )dt\right ],\label{Ctype}
 \end{align} 
as   $\phi \left (X_{x}^{[a,b]*} \left ( \tau_{(a,b)} (x) \right ) \right ) = u\left ( X_{x}^{[a,b]*} \left( \tau_{(a,b)} (x)\right)\right )$ by assumption. Finally, since at the random time $\tau_{(a,b)} (x)$ the process $X_{x}^{[a,b]*}$ takes  either the value $a$ or the value $b$,   the first term in the r.h.s of (\ref{Ctype})  can be written as
 \begin{align}\nonumber
\mathbf{E} &\left [  e^{-\lambda \tau_{(a,b)} (x)}  u \left(  X_x^{[a,b]*} \left( \tau_{(a,b)} (x) \right) \right ) \right ] \\
=\, &u_a\mathbf{E} \left [  e^{-\lambda \tau_{(a,b)} (x)} \mathbf{1}_{\{X_x(\tau_{(a,b)}(x)) \le a  \}} \right ] + u_b\mathbf{E} \left [  e^{-\lambda \tau_{(a,b)} (x)} \mathbf{1}_{\{X_x (\tau_{(a,b)} (x)) \ge b  \}} \right ], \nonumber
 \end{align}
where $X_x$ is the underlying  process (see (\ref{Lfree})), which yields the result (\ref{StocR-C2}).
$(ii)$ Take $g \in C[a,b]$ such that $\lambda \hat{R}_{\lambda} g (x) = g(x)$ for $x \in \{a,b\}$. Item (i) above ensures that the solution is given by $u = w + \phi$, where $w$ is a RL type solution  and $\phi$ is a function satisfying the conditions given in Definition \ref{Def3SolC}.  By  Theorem \ref{WellP-RL2},  $w$ belongs to $\mathfrak{D}_{[a,b]}^{kill}$ whenever
\[g(a) = \lambda u_a + (-L_{[a,b]*} \phi)(a)\quad \text{and} \quad  g(b) = \lambda u_b + (-L_{[a,b]*} \phi)(b).\] But, by Theorem \ref{L0diffusion}, $(-L_{[a,b]*} \phi)(a) = (-L_{[a,b]*} \phi)(b) = 0$ because $\phi \in \mathfrak{D}_{[a,b]*}^{stop}$.  Further, assumption $\lambda \hat{R}_{\lambda} g (x) = g(x)$ in $\{a,b\}$ implies $-L_{[a,b]*} u (x) = 0$ for $x \in \{a,b\}$, which in turn implies $-L_{[a,b]*}u =  -L_{stop} u$. Hence,  Theorem \ref{L0diffusion} guarantees that  $u \in \mathfrak{D}_{[a,b]*}^{stop}$ whenever $g(a)= \lambda u_a$ and $g(b) = \lambda u_b$, as required.

$(iii)$ Follows  from the representation (\ref{StocR-C2})  and  from (\ref{boundGgn}).
  \end{proof} 

\textbf{Case $-A$ vanishing or $-A = \gamma(\cdot)\frac{d}{dx}$}.  
For these cases, an additional assumption is needed to guarantee the regularity in expectation of the boundary points $\{a,b\}$.
\begin{enumerate}
\item [\textbf{(H1)}] There exist a constant $C > 0$ and $q \in (0,1)$ such that
\begin{align}\nonumber
 \int_{-\infty}^0 \min (|y|, \epsilon) \nu(a,y)dy > C \epsilon^{q} \quad\text{ and } \quad
 \int_{0}^{\infty} \min (y, \epsilon) \nu(b,y)dy > C \epsilon^{q}.
\end{align}
\end{enumerate}

\begin{theorem}\label{L0jump}
 Let  $\lambda \ge 0$.  Assume that the function $\nu$ associated with $\nu_+$ and $\nu_{-}$ (defined via the equalities in (\ref{nu+-}))  satisfies  assumptions (H0) and (H1). Then,  Theorem \ref{WellP-RL2} and Theorem  \ref{WellP-C2} also hold with $\alpha \equiv 0$ and with either $\gamma \equiv 0$ or  $\gamma \in C_0^1[a,b]$.
\end{theorem}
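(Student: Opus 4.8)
The plan is to deduce Theorem~\ref{L0jump} from Theorem~\ref{L0diffusion-int}. A glance at the proofs of Theorem~\ref{L0diffusion} and of Theorems~\ref{WellP-RL2}--\ref{WellP-C2} shows that they use only three consequences of Theorem~\ref{L0diffusion-int}: that $(-L_{[a,b]*},\hat{\mathfrak{D}}_{*})$ generates a Feller process $\hat{X}$ on $[a,b]$ whose domain contains $\{f\in C^{2}[a,b]:f'\in C_0[a,b]\}$; that $\{a,b\}$ are regular in expectation for it; and that $\sup_{x\in(a,b)}\mathbf{E}[\hat{\tau}_{(a,b)}(x)]<\infty$ (it is this finiteness that makes the potential operators, and the continuous-dependence estimate~(\ref{boundGgn}), meaningful). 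So it suffices to re-establish these three facts when $\alpha\equiv0$ and $\gamma$ is as in the statement; Theorem~\ref{L0diffusion}, and then Theorems~\ref{WellP-RL2}--\ref{WellP-C2}, carry over verbatim.

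I would obtain the Feller-generation part by repeating the argument of Section~\ref{sec:8} used for Theorem~\ref{L0diffusion-int}(i): removing the term $\alpha(\cdot)\frac{d^{2}}{dx^{2}}$ lowers the order of $-L_{[a,b]*}$ to at most one, so the smoothness $C_0^{3}[a,b]$ and the positivity of $\alpha$ — which there served only the diffusive part — may be relaxed to $\gamma\in C_0^{1}[a,b]$ (or $\gamma\equiv0$), and no ellipticity is needed to close the operator on a core; alternatively, one combines the one-sided results of~\cite{KVFDE} for $-D_{a+*}^{(\nu_+)}$ and $-D_{b-*}^{(\nu_-)}$ with the boundary-redirection construction of Remark~\ref{paths}. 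Assumption (H1) plays no role at this stage.

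The substance of the theorem — the step replacing the use of $\alpha>0$ — is to prove, using (H1), that $\{a,b\}$ are regular in expectation and that $\hat{\tau}_{(a,b)}(x)$ has uniformly bounded expectation. I would try a barrier argument: produce $f\in\hat{\mathfrak{D}}_{*}$ with $f\ge0$, $f(a)=f(b)=0$ and $-L_{[a,b]*}f\le-c$ on $(a,b)$ for some $c>0$; since $-L_{[a,b]*}$ generates $\hat{X}$, Dynkin's formula applied to $f(\hat{X}_{t\wedge\hat{\tau}_{(a,b)}(x)})$ and $t\uparrow\infty$ then gives $\mathbf{E}[\hat{\tau}_{(a,b)}(x)]\le c^{-1}f(x)$, which is $\le c^{-1}\|f\|$ and tends to $0$ as $x\to a$ or $x\to b$. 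To check the differential inequality near $b$ (the case of $a$ being symmetric), set $\epsilon=b-x$ and estimate $-L_{[a,b]*}f(x)$ termwise: the generalized Caputo term with terminal $b$ — governed by the jumps of the underlying process~(\ref{Lfree}) heading towards $b$ — is, by the bound on that kernel in (H1) together with the continuity in $x$ furnished by (H0) (so that the estimate survives the coupling $\epsilon=\mathrm{dist}(x,b)\to0$), bounded above by $-c'\epsilon^{\rho+q-1}$, where $\rho$ is the order to which $f$ vanishes at $b$ (a stable-like scaling of index $1-q\in(0,1)$); one calibrates $\rho\approx1-q$, so this term is a negative constant, and then shows that the terminal-$a$ term and the drift $\gamma(x)f'(x)$ are of higher order in $\epsilon$, the drift because $\gamma(b)=0$ forces $|\gamma(x)|\le\|\gamma'\|\,\epsilon$. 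A bump-type $f$ is chosen so the inequality holds also on the compact middle portion $[a+\eta,b-\eta]$.

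I expect this barrier estimate to be the main obstacle, for two reasons. First, a barrier vanishing to order $\rho\approx1-q<1$ at the endpoints is not of class $C^{2}$ — it lies in the full domain $\hat{\mathfrak{D}}_{*}$ but not in the subclass quoted in Theorem~\ref{L0diffusion-int}(i) — so one must either verify directly that a suitably smoothed/truncated version still belongs to $\hat{\mathfrak{D}}_{*}$ (checking that its image under $L_{[a,b]*}$ is bounded and continuous on $[a,b]$, which the calibration $\rho\approx1-q$ is designed to ensure) or argue by approximation. Second, one must dominate the contribution of the jumps in the \emph{opposite} direction — it is controlled only through the clause $\lim_{\delta\to0}\sup_{x}\int_{|y|\le\delta}|y|\nu(x,y)dy=0$ of (H0) — which, depending on the relative singularity of the two kernels, may require a less naive barrier (combining the two terminals, or several exponents), or a direct probabilistic estimate: majorising $\hat{\tau}_{(a,b)}(x)$ near $b$ by the first-passage time to level $b-x$ of a stable-like subordinator whose Lévy measure is minorised via (H1), whose expectation is of order $(b-x)^{1-q}\to0$, and combining this with the strong Markov property and the uniform finiteness of exit times on $[a+\eta,b-\eta]$ to kill the remainder. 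Once regularity in expectation and uniform finiteness of $\hat{\tau}_{(a,b)}(x)$ are established, the theorem follows by the reduction of the first paragraph.
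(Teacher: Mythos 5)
Your proposal takes essentially the same route as the paper, whose entire proof of this theorem is ``the reasoning is the same as before'': one reruns the construction of Theorem~\ref{L0diffusion-int} without the diffusive part and lets (H1) replace the role of $\alpha(a),\alpha(b)>0$ in the Lyapunov-function step of part (ii), via the barrier $(x-a)^{w}$ (resp.\ $(b-x)^{w}$) with $w\le 1-q$, exactly as you calibrate. The caveats you flag (domain membership of the non-$C^{2}$ barrier, domination of the opposite-direction jumps) are genuine, but the paper omits all such details, so your treatment is if anything more explicit than the published one.
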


\begin{proof}
Since the reasoning is  same as before, we omit the details.
\end{proof}

To finish this section, let us consider the following result related to the exit time of Feller processes  from bounded  intervals and generalized fractional equations of Caputo type.
Let $X_x$ be the process generated by (\ref{Lfree}). Define $\Pi_a(x)$ and $\Pi_b (x)$ as the event  that the process $X_x$ leaves the interval $(a,b)$ through the lower boundary $a$, and  through the upper boundary $b$, respectively, i.e.
\begin{equation}\nonumber
\Pi_a (x) := \left \{  X_{x}\left (\tau_{(a,b)} (x) \right ) \le a \right \}\quad \text{ and } \quad \Pi_b (x) := \left \{  X_x\left (\tau_{(a,b)} (x) \right ) \ge b \right \}.
 \end{equation}
Let $H^D(x,\cdot)$ be the \textit{potential measure} for the process $X_x$ (see, e.g. \cite{Getoor}) defined by
\[  H^D(x,dy) := \mathbf{E} \left [  \int_0^{\infty} \mathbf{1}_{\{ X_x(t) \in dy\}} \mathbf{1}_{\{ \forall s \le t, X_x (s) \in D\}} dt \right ]. \]

\begin{corollary}\label{twosidedT}
Under the assumptions of Theorem \ref{WellP-C2}, the generalized solution to the two-sided equation  (\ref{Eq0-C}) with $\lambda= 0$  is given by
\begin{equation}\label{PaPbH}
 u(x) =  u_a \mathbf{P} [\Pi_a(x)] + u_b \mathbf{P}[\Pi_b (x)] +  \int_a^b g(y) H^{(a,b)}(x,dy).
\end{equation}
In particular, $u(x) = \mathbf{E} \left [ \tau_{(a,b)} (x)\right ]$   is the  generalized solution to the two-sided equation with $g = -1$ and $u_a = u_b = 0$. Further, $u(x) = \mathbf{P} [\Pi_a (x)]$ is the generalized solution to the equation with  $g = 0$, $u_a = 1$ and $u_b = 0$, whereas $u(x) = \mathbf{P} [\Pi_b(x)]$ solves the equation with $g = 0, u_a = 0$ and $u_b = 1$.
\end{corollary}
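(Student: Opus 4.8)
The plan is to read (\ref{PaPbH}) off the stochastic representation (\ref{StocR-C2}) of Theorem \ref{WellP-C2}(i), specialised to $\lambda=0$, by rewriting its three terms in terms of the events $\Pi_a(x)$, $\Pi_b(x)$ and the potential measure $H^{(a,b)}$. With $\lambda=0$ the discount factor $e^{-\lambda\tau_{(a,b)}(x)}$ is identically $1$, so the first two summands of (\ref{StocR-C2}) reduce to $u_a\,\mathbf{E}\bigl[\mathbf{1}_{\{X_x(\tau_{(a,b)}(x))\le a\}}\bigr]=u_a\,\mathbf{P}[\Pi_a(x)]$ and $u_b\,\mathbf{E}\bigl[\mathbf{1}_{\{X_x(\tau_{(a,b)}(x))\ge b\}}\bigr]=u_b\,\mathbf{P}[\Pi_b(x)]$, straight from the definitions of these events. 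One uses here that $\tau_{(a,b)}(x)<\infty$ almost surely, which follows from $\mathbf{E}[\tau_{(a,b)}(x)]<\infty$ (Theorem \ref{L0diffusion-int}(ii)) and which also gives that $\Pi_a(x)$ and $\Pi_b(x)$ partition $\Omega$ up to a null set, so $\mathbf{P}[\Pi_a(x)]+\mathbf{P}[\Pi_b(x)]=1$.

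For the remaining summand I would prove $\mathbf{E}\bigl[\int_0^{\tau_{(a,b)}(x)}g(X_x(t))\,dt\bigr]=\int_a^b g(y)\,H^{(a,b)}(x,dy)$. Writing the inner integral as $\int_0^\infty g(X_x(t))\,\mathbf{1}_{\{t<\tau_{(a,b)}(x)\}}\,dt$, one checks that $\{t<\tau_{(a,b)}(x)\}=\{X_x(s)\in(a,b)\text{ for all }s\le t\}$ almost surely — this uses only the definition of $\tau_{(a,b)}(x)$ together with right-continuity of the paths of $X_x$ — and that $X_x(t)\in(a,b)$ on this event. Taking expectations, exchanging $\mathbf{E}$ with the $dt$-integration by Fubini's theorem (legitimate because $g$ is bounded and $\mathbf{E}[\tau_{(a,b)}(x)]<\infty$) and matching with the definition of $H^D$ for $D=(a,b)$ produces the asserted equality. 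Combining the three contributions yields (\ref{PaPbH}); that this $u$ is \emph{the} generalized solution and depends continuously on $(g,u_a,u_b)$ is already contained in Theorem \ref{WellP-C2}.

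The three displayed special cases then follow by substitution into (\ref{PaPbH}). With $u_a=1$, $u_b=0$, $g\equiv0$ only the first term survives, so $u(x)=\mathbf{P}[\Pi_a(x)]$, and symmetrically $u(x)=\mathbf{P}[\Pi_b(x)]$ for $u_a=0$, $u_b=1$, $g\equiv0$. With $u_a=u_b=0$ only the potential term $\int_a^b g(y)\,H^{(a,b)}(x,dy)$ remains, and since $H^{(a,b)}(x,(a,b))=\mathbf{E}\bigl[\int_0^\infty\mathbf{1}_{\{\forall s\le t,\ X_x(s)\in(a,b)\}}\,dt\bigr]=\mathbf{E}[\tau_{(a,b)}(x)]$ (again from the definition of $H^D$), the choice $g\equiv-1$ identifies the generalized solution with the mean exit time $u(x)=\mathbf{E}[\tau_{(a,b)}(x)]$.

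Since the corollary is in the end only a reformulation of Theorem \ref{WellP-C2}(i) at $\lambda=0$, no serious obstacle is expected. The single point demanding genuine care is the identification in the second paragraph: the $\mathbf{P}$-null-set equality of $\{t<\tau_{(a,b)}(x)\}$ with $\{X_x(s)\in(a,b)\ \forall s\le t\}$, and the justification of the exchange of expectation and time-integration that converts the occupation functional of the motion killed on exiting $(a,b)$ into an integral against $H^{(a,b)}$. The rest is routine, resting on $\mathbf{P}[\Pi_a(x)]+\mathbf{P}[\Pi_b(x)]=1$ and $H^{(a,b)}(x,(a,b))=\mathbf{E}[\tau_{(a,b)}(x)]$.
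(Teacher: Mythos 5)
Your proof is correct and follows the only natural route, which is also the paper's (implicit) one: the corollary is stated without proof as an immediate specialization of the representation (\ref{StocR-C2}) in Theorem \ref{WellP-C2}(i) to $\lambda=0$, combined with the definition of the potential measure $H^{(a,b)}$, and that is exactly what you carry out, including the only step needing care (the a.s.\ identification of $\{t<\tau_{(a,b)}(x)\}$ with $\{\forall s\le t,\ X_x(s)\in(a,b)\}$ and the Fubini exchange, both justified by $\mathbf{E}[\tau_{(a,b)}(x)]<\infty$ and boundedness of $g$). One caveat on the last special case: your own computation gives $\int_a^b(-1)\,H^{(a,b)}(x,dy)=-\mathbf{E}[\tau_{(a,b)}(x)]$, so $g\equiv-1$ yields $u=-\mathbf{E}[\tau_{(a,b)}(x)]$, and the identification $u=\mathbf{E}[\tau_{(a,b)}(x)]$ in fact requires $g\equiv 1$ (consistent with the first example of Section \ref{sec:7}, where $g=1$, $\lambda=0$ produces the mean exit time); the sign you reproduce is an inconsistency in the corollary's statement itself, and asserting $+\mathbf{E}[\tau_{(a,b)}(x)]$ right after deriving the formula that gives the opposite sign is the one genuine slip in your write-up.
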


\section{Examples} \label{sec:7}
\setcounter{section}{7}
\setcounter{equation}{0}\setcounter{theorem}{0}

\begin{example}  Consider the two-sided  Caputo fractional equation
\begin{align}\nonumber
D_{-1+*}^{\beta} w(x) &+ D_{+1-*}^{\beta} w(x) = - \lambda w(x) + g(x), \quad x \in (-1,1) \\ \label{S-Wat}
&w(-1) = 0 = w(1).
\end{align}
By Theorem \ref{L0jump}, the boundary value problem  (\ref{S-Wat}) is well-posed in the generalized sense for any $g \in B[-1,1]$ with solution  \begin{align}
 w(x) &=    \mathbf{E} \left [   \int_0^{\tau_{(-1,1)} (x) } e^{-\lambda t} g \left( X_x^{\beta} (t)\right )dt\right ], \quad \lambda \ge 0,\nonumber
 \end{align}
 where  $X_{x}^{\beta}$ is a symmetric stable process with exponent $\beta \in (0,1)$  and
 \[ \tau_{(-1,1)} (x) := \inf \left \{ t \ge 0\,:\, X_x^{\beta} (t) \notin (-1,1) \right\}.\]
Further,  if $g = 1$ and $\lambda = 0$,  then the \textit{ mean exit time} $\mathbf{E} \left [ \tau_{(-1,1)} (x)\right ]$ is the unique generalized solution to  (\ref{S-Wat}). Moreover, by Theorem 2.1 in \cite{watanabe}, we obtain the explicit solution
\[ w(x)= \frac{(1-x^2)^{\beta/2}}{\Gamma (\beta + 1)}.\]
\end{example}

\begin{example} Consider now the two-sided Caputo fractional equation:
\begin{align}\nonumber
D_{-1+*}^{\beta} h(x) &+ D_{+1-*}^{\beta} h(x) = 0, \quad x \in (-1,1)\quad \beta \in(0,1), \\
&h(-1) = 0, \quad  h(1) = 1.\label{S-WatC}
\end{align}
Corollary \ref{twosidedT} gives the unique generalized solution \[h(x) = \mathbf{P} \left [  X_x^{\beta} (\tau_{(-1,1)} (x)) \in [1, \infty) \right ],\] which is given explicitly by  \cite[Formula 3.2]{watanabe}
\begin{equation}\label{Pa}
  h(x) = 2^{1-\beta} \frac{\Gamma(\beta)}{\Gamma(\beta /2)^2} \int_{-1}^x (1-y^2)^{\frac{\beta}{2} -1}dy.
  \end{equation}
Furthermore, again by Corollary \ref{twosidedT}, the equation
\begin{align}\nonumber
D_{-1+*}^{\beta} v(x) &+ D_{+1-*}^{\beta} v(x) = 0, \quad x \in (-1,1),\quad\beta \in(0,1),  \\ \label{S-WatCC}
&v(-1) = 1, \quad  v(1) = 0.
\end{align}
has solution  $v(x) = 1 - h(x).$
\end{example}

\begin{example} The two-sided Caputo fractional equation
\begin{align}\nonumber
 D_{-1+*}^{\beta} u(x) &+ D_{1-*}^{\beta} u(x) = g(x)\quad x \in (-1,1),\quad \beta \in (0,1), \\
&u(-1) = u_{-1}, \quad  u(1) = u_{1}, \quad \quad u_{-1}, u_1 \in \mathbb{R},\label{GenC}
\end{align}
has a unique generalized solution (Corollary \ref{twosidedT}) which rewrites
\[  u(x) = (u_{1} -u_{-1}) h(x) + u_{-1} + \int_{-1}^1 g(y) H_{\beta}^{(-1,1)}(x,y)dy,\]
where $h(x)$ is the function given in (\ref{Pa}), and $H_{\beta}^{(-1,1)}(x,y)$ (the density of the potential measure of the process $X_x^{\beta}$) is given by \cite{watanabe}
\[  H_{\beta}^{(-1,1)}(x,y) =  2^{-\beta} \pi_{-1/2} \frac{\Gamma (1/2)}{(\Gamma (\beta/2))^2} \int_0^z (r+1)^{-\frac{1}{2}}r^{\frac{\beta}{2} -1} |x-y|^{\beta-1}dr,\]
with $ z = (1-x^2) (1-y^2)/ (x-y)^2.$
\end{example}

\begin{remark}
Observe that all the explicit solutions $w,v, h$ and $u$ above are smooth solutions since they belong to $ C[-1,1] \cap C^1(-1,1)$.
\end{remark}
\section{Proofs}  \label{sec:8}
\setcounter{section}{8}
\setcounter{equation}{0}\setcounter{theorem}{0}

Firstly, let us observe that
for $f \in C^1[a,b]$, by setting $g(x) = f'(x)$ we can rewrite
\begin{align}\label{dM*}
-L_{[a,b]*}^{(\nu)} f(x) &= M_{*}^{(\nu)} g(x) := \int_{a-x}^{b-x} \int_{x}^{x+y} g(z)dz  \nu(x,y)dy +  \\ 
&+ \int_x^b g(z)dz \int_{b-x}^{\infty} \nu(x,y)dy + \int_x^a  g(z)dz  \int_{-\infty}^{a-x} \nu(x,y)dy. \nonumber
\end{align}

 \subsection{Proof of Theorem \ref{L0diffusion-int}}

\begin{proof}
$(i)$ Let us approximate $-L_{[a,b]*}$ by a family of   operators \hfill \break 
 $\left (\,-L_{h*} \,\right)_{h\in(0,1]}$ defined by
   \begin{equation}\label{Lh*}
    -L_{h*}:= -L_{[a,b]*}^{(\nu_h)} - A^{(\gamma,\alpha)},
   \end{equation}
   where   $\nu_h (x,y) := \Phi_h(x,y) \nu(x,y)$ with $\Phi_h (x,y)$ being a smooth function on $[a,b] \times \mathbb{R}$, which equals 1 on the set $
  \{|y| > h,x \in [a+h,b-h]\}$  and vanishes near the boundary; and the operator   $(-A^{(\gamma,\alpha)}, \mathfrak{D}_A)$ is  the generator of a  diffusion on $[a,b]$  with reflecting boundaries \{a,b\} (see, e.g. \cite[Chapter V, Section 6]{batach})
with a domain \[\mathfrak{D}_A := \left \{ f \in C[a,b]\,:\, -A^{(\gamma,\alpha)} f \in C [a,b],\, f'(a)=0, \,\, f'(b) = 0 \right \}.\]
Then, for each $h \in (0,1]$ the operator $-L_{h*}$  decomposes as   a  diffusion on $[a,b]$   perturbed by the bounded operator $-L_{[a,b]*}^{(\nu_h)}$ on $C[a,b]$, so that by  perturbation theory (see, e.g., \cite[Theorem 1.9.2]{KV0})   the operator    $(-L_{h*}, \mathfrak{D}_A)$  generates a Feller  semigroup   $T_t^h$ on $C[a,b]$.  This semigroup is the unique (bounded) solution to the evolution equation
\begin{equation}\label{evLh}
\frac{d}{dt} f_t (x) = - L_{h*} f_t(x), \quad f_0 = f \in \mathfrak{D}_A.
\end{equation}
Moreover,   due to the smoothness  assumptions on   $\gamma, \alpha$ and $\nu$, the spaces  $ \{ f\in C^j[a,b]: f' \in C_0[a,b] \}$ for $ j\in \{2,3\}$ are invariant cores of $-L_{h*}$ \cite[Theorem 1.9.2,(iii)]{KV0}.  Hence, if $ f \in C^3[a,b]$ with $f' \in C_0[a,b]$,  then  $T_t^h f \in C^3[a,b]$  and  $-L_{h*} T_t^h f \in C^1[a,b]$.

  Differentiating (\ref{evLh}) with respect to  $x$, rearranging terms and using (\ref{dM*}),    yield the evolution equation for $g_t(x) = f'_t (x)$  given by
\begin{equation}\label{evLg}
\frac{d}{dt} g_t(x)
=\,\, -\bm{L}^{h,(1)} g_t(x),
\end{equation}
where
\begin{align}\label{dLtilde}
 -\bm{L}^{h,(1)} g(x):&=  - A^{(\gamma + \alpha',\alpha)} g(x) + \left [ - L_{[a,b]}^{(\nu_h)}   - M_{*}^{(\partial_x\nu_h)}  + \gamma'(x) \right ] g(x).
  \end{align}
 Since (by assumption) $\alpha'$ vanishes  on $\{a,b\}$,  the operator $-\bm{L}^{h,(1)}$ decomposes  as a diffusion  $- A^{(\gamma+\alpha', \alpha)}$ on $[a,b]$ (with reflecting boundaries) perturbed by  the bounded operator  $K_h$ on $C[a,b]$ given by
 \[  K_h := - L_{[a,b]}^{(\nu_h)}   - M_{*}^{(\partial_x\nu_h)}  + \gamma'(\cdot). \]
  Hence, $-\bm{L}^{h,(1)}$     generates a strongly continuous semigroup of contractions on $C[a,b]$,    denoted by  $T_t^{h,(1)}$.
Due to the invariance of the space $\{f \in C^3[a,b]\,:\, f' \in C_0[a,b]\}$,  it follows that $ \frac{d}{dx}(T_t^h f )(x) = \left(T_t^{h,(1)} f'\right)(x)$ for  $f$ in the latter space.
Now,  the perturbation series representation for the semigroup $T_t^{h,(1)}$ \cite[Equality 1.78, p. 52]{KV0}) implies
\begin{align}\label{seriesD1}
|| T_t^{h,(1)} f'|| \le ||  f' || + \sum_{m=1}^{\infty} \frac{(t\,||K_h||)^m }{m!} || f'||.
\end{align}
 Thus, as $K_h$ is uniformly bounded in $h$ due to  the bounds   from assumption (H0),
 the derivative  $\frac{d}{dx}\left(T_t^h f\right)(x) $ is uniformly bounded  in $h$ and  $t \le t_0$  whenever $f \in C^3[a,b]$ with $f' \in C_0[a,b]$.

Let us now  write (see \cite[Lemma 19.26, p. 385]{Kall})
\begin{align*}
(T_t^{h_1} - T_t^{h_2}) f = \int_0^t T_{t-s}^{h_2} \left (-L_{h_1*}+ L_{h_2*}\right ) T_{s}^{h_1} f\,ds,
\end{align*}
for $0<h_2 \le  h_1 < 1$ and $f \in C^3[a,b]$ with $f'\in C_0[a,b]$.   Since $T_t^{h_1} f$ is differentiable (with derivative uniformly bounded in $h$ given by $T_t^{h_1,(1)}f'$),  we can  estimate (by mean value theorem)
\begin{align*}
\Big | \left (-L_{h_1*}+ L_{h_2*}\right ) T_s^{h_1} f(x) \Big | \, &\le \, \int_{h_2 \le |y| \le h_1} \Big | T_s^{h_1} f(x+y) - T_s^{h_1} f(x)   \Big | \nu(x,y)dy \\
&\le \int_{h_2 \le |y| \le h_1}  || T_s^{h_1,(1)} f'|| |y|  \nu(x,y)dy \\
&=  o(1) ||T_s^{h_1,(1)} f'|| = o(1)||f||_{C^1}, \quad h_1 \to 0.
\end{align*}
The last equality holds due to the assumption (H0)  (i.e, the uniform bound of the first moment of $\nu$ and its tightness property).
Therefore,
\begin{equation}\label{Shf-C}
|| \left(T_t^{h_1} - T_t^{h_2} \right)f|| = o(1) t ||f||_{C^1}.
\end{equation}
 Thus,  for each $f \in C^3[a,b]$ with $f'\in C_0[a,b]$,  the family $\{T_t^h f\}$ converges  to a limiting  family $\{T_t f\}$ as $h \to 0$.
 It follows then that  the limiting family forms a strongly continuous   semigroup of contractions  on $C[a,b]$ (by standard approximation arguments). Now write
\[ \frac{T_t f -f}{t} = \frac{T_t f - T_t^h f}{t} + \frac{T_t^h f - f}{t}. \]
Using the estimate (\ref{Shf-C}), we conclude that $\{ f \in C^3[a,b]\,:\,f'\in C_0[a,b]\}$   belongs to the  domain of the generator,  and that the generator  is given by $-L_{[a,b]*}$ as
\[ 
\lim_{t\downarrow 0} \frac{T_t f -f}{t} = \lim_{h\downarrow 0}\lim_{t\downarrow 0}\frac{T_t f - T_t^h f}{t} + \frac{T_t^h f - f}{t} = -L_{[a,b]*} f. \]
Now, take $f \in C^2[a,b]$ and  $\{f_n\}  \subset \{ f \in C^3[a,b]\,:\, f' \in C_0[a,b]\}$ such that $f_n \to f$ uniformly as $n\to \infty$.   Since the operator $-L_{[a,b]*}$ is closed \cite[Corollary 1.6]{EK} and  $-L_{[a,b]*} f_n \to  g$ as $n \to \infty$ for some $g$, it follows that $g = -L_{[a,b]*} f$ and $f \in \hat{\mathfrak{D}}_{*}$. Therefore,  the space $\{f \in C^2[a,b]\,:\, f' \in C_0[a,b]\}$ also belongs to the domain of the generator, as required.

$(ii)$ Take the function $f_w(x) = (x-a)^w$ for some sufficiently small $w \in (0,1)$.    We will prove that  $\left(-L_{[a,b]*} f_w \right)(x) < 0$ for $x \in (a,c)$ and  $c \in (a,b)$ (see method of Lyapunov functions, e.g., \cite[Proposition 6.3.2]{KV0}). Since
 \begin{equation}\nonumber \left( -L_{[a,b]*} f_w\right) (x) = -L_{[a,b]*}^{(\nu)}f_w (x) + w \gamma(x) (x-a)^{w-1} + w (w-1)\alpha(x) (x-a)^{w-2},
 \end{equation}
  when $\gamma(a) = 0$ and $\alpha(a) > 0$, then $\left( -L_{[a,b]*} f_w\right) (x)  < 0$ as  the first two terms in the r.h.s of the previous equality  are dominated by the last term which tends to $-\infty$ as $x \to a$.  The regularity for $x=b$ is proved analogously but with  $f_w(x) = (b-x)^{w}$. Finally, Proposition 6.3.2 in \cite{KV0}   implies   the finite expectation of  $\hat{\tau}_{(a,b)}(x)$. 
  \end{proof}   

 \subsection{Proof of Theorem \ref{L0diffusion}}

 \begin{proof}  
 $(i)$ Theorem \ref{L0diffusion-int} implies that  $(-L_{[a,b]*}, \hat{\mathfrak{D}}_{*})$ generates a  Feller process $\hat{X}_x$  on $[a,b]$ and ensures the regularity in expectation of the boundary points  $\{a,b\}$. Hence, the stopped process  $X_x^{[a,b]*} := \{  \hat{X}_x (s \wedge \tau_{(a,b)}(x))\}_{s \ge 0}$   is also a Feller process on $[a,b]$ \cite[Theorem 6.2.1, Chapter 6]{KV0}. Let us denote by $(-L_{stop},\,\mathfrak{D}_{[a,b]*}^{stop})$ the generator of the stopped process  with a domain denoted by $\mathfrak{D}_{[a,b]*}^{stop}$. By definition of $X_x^{[a,b]*}$  the states $\{a,b\}$ are absorbing,  which implies that  $(-L_{stop}f )(x) = 0$ for $x \in \{a,b\}$ and  $f \in D_{[a,b]*}^{stop}$.

Take now $f \in \hat{\mathfrak{D}}_*$ such that $-L_{[a,b]*} f (x) = 0 $ in $\{a,b\}$. Since the domain of the generator is given by the image of its  resolvent operator (say $\hat{R}_{\lambda}$),  given $f \in \hat{ \mathfrak{D}}_*$  there exists $g \in C[a,b]$ such that $f = \hat{R}_{\lambda} g$.

Using that $f$ solves the resolvent equation
\[  \lambda \hat{R}_{\lambda }g + L_{[a,b]*} f = g, \]
and that (by assumption)  $-L_{[a,b]*} f (x) = 0$ for $xÊ\in \{a,b\}$, we get
\begin{equation}\label{boundaries}
f(a) = \hat{R}_{\lambda} g(a) = g(a) /\lambda \quad \text{ and }\quad  f(b) =\hat{R}_{\lambda} g(b) = g(b) /\lambda.\end{equation}
Moreover, Dynkin's formula implies
\begin{align*}
 \hat{R}_{\lambda} g(x) &= \mathbf{E}\left [ \int_0^{\tau_{(a,b)}(x)} e^{-\lambda s} g \left ( \hat{X}_x (s)\right )ds\right ] + \mathbf{E} \left [e^{-\lambda \tau_{(a,b)}(x)}  f \left( \hat{X}_x(\tau_{(a,b)} (x) )\right)\right ]
\end{align*}
for each $x \in (a,b)$.
Using that the paths of the process $\hat{X}_x$ and $X_x^{[a,b]*}$ coincide before the first exit time $\tau_{(a,b)}(x)$, the previous expression becomes
\begin{align*}
\hat{R}_{\lambda }g(x) &= \mathbf{E}\left [ \int_0^{\tau_{(a,b)}(x)} e^{-\lambda s} g \left ( X^{[a,b]*}_x (s)\right )ds\right ] \,\, + \\ & \quad\quad  + \,\,\,\mathbf{E} \left [  e^{-\lambda \tau_{(a,b)}(x)}\,  \left (  f(a)\mathbf{1}_{\{\tau_a < \tau_b\}} + f(b) \mathbf{1}_{\{\tau_b < \tau_a \}} \right )\,\right ],
\end{align*}
 where $\tau_a$ and $\tau_b$ denote the first exit time through the boundary point $a$ and $b$, respectively.  Finally, plugging  (\ref{boundaries}) into the second term of the r.h.s of the last formula we get that $f = \hat{R}_{\lambda} g   = R_{\lambda}^{[a,b]*} g$, where $R_{\lambda}^{[a,b]*}$ denotes the resolvent operator of  $X^{[a,b]*}$.  Therefore, $f \in \mathfrak{D}_{[a,b]*}^{stop}$ as  there exits $g \in C[a,b]$ such that $f = R_{\lambda}^{[a,b]*}g$, which in turn implies that $-L_{stop} f = -L_{[a,b]*} f$. \\
 $(ii)$ Follows the same arguments as before, so that we omit the details.
  \end{proof}  

\section*{Acknowledgements}

The first author is supported by Chancellor International Scholarship and the Department of Statistics through the University of Warwick, UK.




 \bigskip \smallskip

 \it

 \noindent
Department of Statistics \\
University of Warwick, Coventry CV4 7AL, UK \\ [6pt]
  $^1$ e-mail: M.E.Hernandez-Hernandez@warwick.ac.uk\\
  $^2$  e-mail: V.Kolokoltsov@warwick.ac.uk

\begin{thebibliography}{99}
 \normalsize
 
 
\bibitem{Agrawal2002} O.P. Agrawal, Formulation of Euler-Lagrange equations for fractional variational problems. \emph{J. Math. Anal.  Appl.} \textbf{272}, No 1 (2002), 368--379.

\bibitem{Om2012} O.P. Agrawal, Some  generalized fractional calculus operators and their applications in integral equations. \emph{Fract. Calc. Appl. Anal.} \textbf{15}, No 4  (2012),  700--711; DOI: 10.2478/s13540-012-0047-7; .....

\bibitem{teodor} T.M. Atanackovic and B. Stankovic, On a differential equation with left and right fractional derivatives.
  \emph{Fract. Calc. Appl. Anal.} \textbf{10}, No 2  (2007),  139--150; ......

\bibitem{batach} R.N. Bhattacharya and E.C. Waymire, \emph{Stochastic Processes with Applications}. Wiley Series in Probability and Mathematical Statistics (1990).

\bibitem{bertoin96} J. Bertoin, On the first exit time of a completely asymmetric stable process from a finite interval. \emph{Bull. London Math. Soc.} \textbf{28} (1996) , 514--520. 

\bibitem{Getoor} R.M. Blumenthal, R.K. Getoor and D.B. Ray, On the distribution of first hits for the symmetric stable processes. \emph{Trans. Amer. Math. Soc.}  \textbf{99} (1961), 540--554.

\bibitem{Bouchaud1990} J.P. Bouchaud and A. Georges, Anomalous diffusion in disordered media: statistical mechanism, models and physical applications. \emph{Phys. Rep.} \textbf{195} (1990), 127--293.

\bibitem{carpinteri1997}  A. Carpinteri  and F. Mainardi,   \emph{Fractals and Fractional Calculus in Continuum Mechanics}. Springer-Verlag, New York (1997),  291Ð348.

\bibitem{kai2002} K. Diethelm, N.J. Ford,  and A.D. Freed, A predictor-corrector  approach for the numerical solution of fractional differential equations.  \emph{Nonlinear Dynam.}  \textbf{29}, No 1-4 (2002), 3--22.

\bibitem{kai} K. Diethelm,   \emph{The Analysis of Fractional Differential Equations, An Application-oriented Exposition Using Differential Operators of Caputo Type}. Lecture Notes in  Math.,   Springer (2010).

\bibitem{dynkin1965} E.B. Dynkin,   \emph{Markov Processes}, Vol. I. Springer-Verlag (1965).

\bibitem{edwards} J.T. Edwards, N.J. Ford,   and  A. C. Simpson,   The numerical solutions of linear multi-term fractional differential equations: Systems of equations. \emph{J.  Comput. Appl. Math.}  \textbf{148} (2001), 401--418.

\bibitem{EK} S.N. Ethier and T.G.  Kurtz,   \emph{Markov Processes. Characterization and Convergence}. Wiley Ser. Probab. Math. Statist.,  Wiley, New York (2010).

\bibitem{gorenflo98} R. Gorenflo  and  F. Mainardi,  Fractional calculus and stable probability distributions. \emph{Arch.  Mech.}  \textbf{50}, No 3 (1998), 377--388.

\bibitem{gorenflo2008} R. Gorenflo and F. Mainardi,   \emph{Fractional Calculus: Integral and Differential Equations of Fractional Order}. CISM Lecture Notes, International Centre for Mechanical Sciences, Italy (2008).

\bibitem{KV-1} M.E. Hern\'andez-Hern\'andez  and V.N. Kolokoltsov,  On the probabilistic approach to the solution of  generalized fractional differential equations of Caputo and Riemann-Liouville type. \emph{J.  Fractional Calc. and Appl.}  \textbf{7}, No 1 (2016), Article No 14, 147--175.

\bibitem{KV-2} M.E. Hern\'andez-Hern\'andez  and V.N. Kolokoltsov, {Probabilistic solutions to nonlinear fractional differential equations of generalized  Caputo and Riemann-Liouville type}. Submitted.

\bibitem{Hilfer2000} R. Hilfer, Fractional time evolution, In: \emph{Applications of Fractional Calculus in Physics}, R. Hilfer, Ed., World Scientific Publ. Co., Singapore, New Jersey, London and Hong Kong (2000), 87--130.

\bibitem{Kalla1980} S.L. Kalla, Operators of fractional integration. In: \emph{Proc. Conf. Analytic Functions Kozubnik 1979}, Publ. as: Lecture Notes in Math. \textbf{798} (1980), 258--280.

\bibitem{Kall} O. Kallenberg, \emph{Foundations of Modern Probability}, 2nd Ed..  Springer (Probability and Its Applications) (2001).

\bibitem{Pal} P. Kamal, L. Fang, Y. Yubin and R. Graham, Finite difference method for two-sided space-fractional partial differential equations. In: I. Dimov, I. Farago \& L. Vulkov (Eds.), \emph{Finite Difference Methods, Theory and Applications, 6th International Conference}  (2014),  307--314.

\bibitem{kilbas2} A.A. Kilbas, H.M.  Srivastava, and  J.J. Trujillo,  \emph{Theory and Applications of Fractional Differential Equations}. North-Holland Math. Stud.  \textbf{204} (2006).

\bibitem{Kiryakova1994} V.S. Kiryakova,    \emph{Generalized Fractional Calculus and Applications}.
  Pitman Res. Notes in Math. Ser. 301, Longman \& J. Wiley, Harlow \& N. York (1994).
  
\bibitem{Kiryakova2008} V. Kiryakova, A brief story about the operators of the generalized fractional calculus. \emph{Fract. Calc. Appl. Anal.} \textbf{11}, No 2  (2008),  203--220.

\bibitem{KV} V. N. Kolokoltsov,   Generalized continuous-time random walks (CTRW), subordination by Hitting times and fractional dynamics. \emph{ Theory Probab. Appl.}  \textbf{53}, No 4 (2009), 549--609.

\bibitem{KV0} V.N. Kolokoltsov,  \emph{Markov Processes, Semigroups and Generators}. DeGruyter Studies in Mathematics, Book 38 (2009).

\bibitem{KVFDE} V.N. Kolokoltsov,  On fully mixed and multidimensional extensions of the Caputo and Riemann-Liouville derivatives, related Markov processes and fractional differential equations. \emph{Fract. Calc. Appl. Anal.}  \textbf{18}, No 4 (2015),  1039--1073; DOI: 10.1515/fca-2015-0060; .........

\bibitem{Watson} A.E. Kyprianou and A.R. Watson, Potentials of stable processes, In: Donati-Martin, C., Lejay, A. and Roualt, A., Eds. \textit{Seminaire de Probabilites XLV1}.  Springer, Switzerland (2014), 333--344.

\bibitem{FMainardi1997} F. Mainardi,   {Fractional calculus: some basic problems in continuum and statistical mechanics}. \emph{http://arxiv.org/abs/1201.0863v1}, 2012.

\bibitem{mark0} M.M. Meerschaert and A. Sikorskii,   \emph{Stochastic Models for Fractional Calculus}. De Gruyter Studies in Mathematics \textbf{43} (2012).

\bibitem{Mark2sided} M.M. Meerschaert, C. Tadjeran, Finite difference approximations for two-sided space-fractional partial differential equations. \emph{Appl. Numer. Math.} \textbf{56}, No 1 (2006), 80--90.

\bibitem{podlubny}  I. Podlubny,   \emph{Fractional Differential Equations.  An Introduction to Fractional Derivatives, Fractional Differential Equations, to Methods of Their Solution and Some of Their Applications.} Mathematics in Science and Engineering \textbf{198}, Academic Press, Inc., San Diego (1999).

\bibitem{MarkTemperedFC}  F. Sabzikar, M.M. Meerschaert and J. Chen, Tempered fractional calculus. \emph{J. Comput. Phys.} \textbf{293} (2015), 14--28.

\bibitem{samko} S.G. Samko,   A.A. Kilbas, and O.I. Marichev,   \emph{Fractional Integrals and Derivatives: Theory and Applications}.
 Gordon and Breach Science Publishers S. A. (1993).

\bibitem{2s2008} B. Stankovic, An equation in the left and right fractional derivatives of the same order. \emph{Bull. Cl. Sci. Math. Nat. Sci. Math.} No  33 (2008), 83--90.
    
\bibitem{2s2011} B. Stankovic, Large linear equation with left and right fractional derivatives in a finite interval, \emph{Bull. Cl. Sci. Math. Nat. Sci. Math.} No 36, (2011), 61-79.

\bibitem{CTorres} C. Torres, Existence of a solution for the fractional forced pendulum. \emph{J. Appl. Math. Comput. Mech.}  \textbf{13}, No 1 (2014), 125--142.

\bibitem{watanabe} S. Watanabe, On stable processes with boundary conditions. \emph{J. Math. Soc. Japan} \textbf{14}, No 2 (1962), 170--198.

\bibitem{zaslavsky} G.M. Zaslavsky, Chaos, fractional kinetics, and anomalous transport.  \emph{ Phys. Rep.}  \textbf{371}, No 6 (2002), 461--580.


\end{thebibliography}
\end{document}